\documentclass{amsart}
\usepackage{amssymb}
\usepackage{amscd}
\usepackage{a4wide}
\usepackage[utf8]{inputenc}

 \newcommand{\bdet}[1]{\: \mathrm{det}{(#1)}}
  \def\hsi{\hat{\sigma}}
   \def\bsi{\bar{\sigma}}
\newcommand{\topform}{\overline{\omega}}
\newcommand{\ol}{\overline{\lambda}}
  \def\rend#1#2{{{\rm End}\sb{#1}(#2)}}
  \def\1{\mathbb{I}}
\newcommand{\NN}{\mathbb{N}}
\newcommand{\ZZ}{\mathbb{Z}}
\newcommand{\Hom}[3]{\mathrm{Hom}_{#1}(#2,#3)}

  \newtheorem{proposition}{Proposition}[section]
  \newtheorem{lemma}[proposition]{Lemma}
  \newtheorem{corollary}[proposition]{Corollary}
  \newtheorem{theorem}[proposition]{Theorem}

  \theoremstyle{definition}
  \newtheorem{definition}[proposition]{Definition}

  \theoremstyle{remark}
  \newtheorem{remark}[proposition]{Remark}

  \setcounter{tocdepth}{2}

\begin{document}
\markboth{S.Karacuha and C.Lomp}{Integral calculus on quantum exterior
algebras.}
\title{Integral calculus on quantum exterior algebras}
\author{SERKAN KARA\c{C}UHA}
\address{Department of Mathematics, FCUP, University of Porto, Rua Campo Alegre
687, 4169-007 Porto, Portugal}
\author{CHRISTIAN LOMP}
\address{Department of Mathematics, FCUP, University of Porto, Rua Campo Alegre
687, 4169-007 Porto, Portugal}
\thanks{The authors would like to thank the referee for his/her valuable remarks as well as Tomasz Brzezi\'nski for his comments on the first draft of this paper. This research was funded by the European Regional Development Fund through the programme COMPETE and by the Portuguese Government through the FCT
- Fundação para a Ciência e a Tecnologia under the project PEst-C/MAT/UI0144/2011. The first author was supported by the grant SFRH/BD/51171/2010.}

\begin{abstract}
Hom-connections and associated integral forms have been introduced
and studied by T.Brzezi\'nski as an adjoint version of the usual
notion of a connection in non-commutative geometry. Given a flat
hom-connection on a differential calculus $(\Omega, d)$ over an
algebra $A$ yields the integral complex which for various algebras
has been shown to be isomorphic to the noncommutative de Rham complex
(in the sense of Brzezi\'nski et al. \cite{BrzezinskiKaoutitLomp}).
%The latter property had been coined the strong Poincaré duality (as a version of the Poincaré duality) for $A$ with respect to $\Omega$.
In this paper we shed further light on the question when the integral and the de Rham complex are isomorphic for an algebra $A$ with a
flat hom-connection.
%an algebra satisfies the strong Poincaré duality.
We specialise our study to the case where an $n$-dimensional differential
calculus can be constructed on a quantum exterior algebra over an $A$-bimodule. Criteria are given for free bimodules with
diagonal or upper triangular bimodule structure. Our results are
illustrated for a differential calculus on a multivariate quantum
polynomial algebra and for a differential calculus on Manin's
quantum $n$-space.
\end{abstract}

\maketitle

\section{Introduction}
Let $A$ be an algebra over a field $K$. A derivation
$d:A\rightarrow \Omega^1$ of a $K$-algebra $A$ into an
$A$-bimodule is a $K$-linear map satisfying the Leibniz rule
$d(ab)=ad(b)+d(a)b$ for all $a,b \in A$. The pair $(\Omega^1,d)$
is called a \emph{first order differential calculus} (FODC) on
$A$. More generally a differential graded algebra
$\Omega=\bigoplus_{n\geq 0} \Omega^n$ is an $\NN$-graded algebra
with a linear mapping $d:\Omega\rightarrow \Omega$ of degree $1$
that satisfies $d^2=0$ and the graded Leibinz rule. This means
that $d(\Omega^n)\subseteq \Omega^{n+1}$, $d^2=0$ and for all
homogeneous elements $a,b \in \Omega$ the graded Leibniz rule:
\begin{equation}
d(ab)= d(a)b + (-1)^{|a|}ad(b)
\end{equation}
holds, where $|a|$ denotes the degree of $a$, i.e. $a\in
\Omega^{|a|}$ (see for example \cite{Connes}). We shall call
$(\Omega, d)$ an $n$-dimensional differential calculus on $A$ if
$\Omega^m=0$ for all $m\geq n$. The zero component $A=\Omega^0$ is
a subring of $\Omega$ and hence $\Omega^n$ are $A$-bimodule for
all $n>0$. In particular $d:A\rightarrow \Omega^1$ is a bimodule
derivation and $(\Omega^1,d)$ is an FODC over $A$. The elements of
$\Omega^n$ are then called $n$-forms and the product of $\Omega$
is denoted by $\wedge$. Given an FODC $(\Omega^1,d)$ over $A$, a
connection in a right $A$-module $M$ is a $K$-linear map
$\nabla^0: M \rightarrow M \otimes_A\Omega^1 $ satisfying
\begin{equation} \nabla^0(ma) = \nabla^0(m)a + m\otimes_A d(a) \qquad \forall a\in A,
m\in M. \end{equation} In \cite{Brzezinski2008} T.Brzezinski
introduced an adjoint version of a connection by defining the
notion of a right hom-connection as a pair $(M,\nabla_0)$, where
$M$ is a right $A$-module and $\nabla_0:
\Hom{A}{\Omega^1}{M}\rightarrow M$ is a $K$-linear map such that
\begin{equation} \nabla_0(fa) = \nabla_0(f)a + f(d(a)) \qquad \forall a\in A, f\in
\Hom{A}{\Omega^1}{M}\end{equation} Here the multiplication
$(fa)(\omega):= f(a\omega)$, for all $\omega \in\Omega^1$, makes
$\Hom{A}{\Omega^1}{M}$ a right $A$-module. In case the FODC stems
from a differential calculus $(\Omega, d)$, then a hom-connection
$\nabla_0$ on $M$ can be extended to maps
$\nabla_{m}:\Hom{A}{\Omega^{m+1}}{M} \longrightarrow
\Hom{A}{\Omega^{m}}{M}$ with
\begin{equation} \nabla_{m}(f)(v) = \nabla(fv) + (-1)^{m+1} f(dv), \qquad
\forall f\in \Hom{A}{\Omega^{m+1}}{M}, v\in
\Omega^m.\end{equation}
If $\nabla_0\nabla_1 = 0$, the
hom-connection $\nabla_0$ is called flat. In this paper we will be mostly interested in the case $M=A$. Set $\Omega_m^\ast :=
\Hom{A}{\Omega^m}{A}$ as well as $\Omega^\ast = \bigoplus_m
\Omega_m^\ast$ and define
 $\nabla:\Omega^\ast \rightarrow \Omega^\ast$ by $\nabla(f) = \nabla_m(f)$ for
all $f\in \Omega_{m+1}^\ast$.

If $\nabla_0$ is flat, then $(\Omega^\ast, \nabla)$ builds up the {\it integral
complex}:
\[\begin{CD}\label{diagram1}
\cdots @>\nabla_{3}>> \Omega_3^\ast  @>\nabla_{2}>> \Omega_2^\ast @>\nabla_{1}>>
\Omega_1^\ast  @>\nabla_0>>  A
\end{CD}\]
It had been shown in \cite{Brzezinski2010,BrzezinskiKaoutitLomp}
that for some finite dimensional differential calculi the integral
complex is isomorphic to the {\it de Rham complex} given by
$(\Omega, d)$:
\[\begin{CD}\label{diagram2}
A @>d>> \Omega_1  @>d>> \Omega_2 @>d>> \Omega_3 @>d>>\cdots
\end{CD}\]
i.e. for certain algebras $A$ and $n$-dimensional differential
calculi $\Omega = \bigoplus_{m=0}^n \Omega^m$ it had been proven
that there is a commutative diagram

\[\begin{CD}\label{diagram3}
\Omega_n^\ast @>\nabla_{n-1}>> \Omega_{n-1}^\ast  @>\nabla_{n-2}>> {\cdots}
@>\nabla_{1}>> \Omega_1^\ast @>\nabla_0>> A\\
@A{\Theta_0}AA        @A{\Theta_1}AA           @.   @A{\Theta_{n-1}}AA
@A{\Theta}AA\\
A               @>d>> \Omega^1         @>d>> {\cdots} @>d>> \Omega^{n-1} @>d>>
\Omega^n
\end{CD}\]
in which vertical maps are right $A$-module isomorphisms: In this
case, we say that $A$ satisfies the {\it strong Poincar\'e
duality} with respect to $(\Omega, d)$ and $\nabla$, following
T.Brzezinski \cite{Brzezinski2010}.
 The purpose of this paper is to provide further examples of algebras
and differential calculi whose corresponding de Rham and integral complexes are isomorphic,  which contribute to the general study of algebras with this property. The reader should be warned that the Poincaré duality in the sense of M.Van den Bergh \cite{vandenbergh} (see also the  work of U.Krähmer \cite{Kraehmer}) is different.

\section{Twisted multi-derivations and hom-connections}

From Woronowicz' paper \cite{Woronowicz} it follows that any
covariant differential calculus on a quantum group is determined by
a certain family of maps which had been termed {\it twisted
multi-derivations} in \cite{BrzezinskiKaoutitLomp}. We recall from \cite{BrzezinskiKaoutitLomp} that by a {\em right twisted
multi-derivation} in an algebra $A$ we mean a pair $(\partial, \sigma)$, where
$\sigma: A\to M_n(A)$ is an algebra homomorphism ($M_n(A)$ is the algebra of
$n\times n$ matrices with entries from $A$) and $\partial :A\to A^n$ is a
$k$-linear map such that, for all $a\in A$, $b\in B$,
\begin{equation}\label{eq.partial}
\partial(ab) = \partial(a)\sigma(b) + a\partial(b).
\end{equation}
Here $A^n$ is understood as an $(A$-$M_n(A))$-bimodule. We write
$\sigma(a) = (\sigma_{ij}(a))_{i,j=1}^n$ and $\partial(a) =
(\partial_i(a))_{i=1}^n$ for an element $a\in A$. Then
\eqref{eq.partial} is equivalent to the following $n$ equations
\begin{equation}
\partial_i(ab) = \sum_j \partial_j(a)\sigma_{ji}(b) + a\partial_i(b), \qquad
i=1,2,\ldots, n.
\end{equation}
Given a right twisted multi-derivation $(\partial, \sigma)$ on $A$ we construct
an FODC on the free left $A$-module
\begin{equation}\Omega^1= A^n=\bigoplus_{i=1}^n A\omega_i\end{equation} with
basis $\omega_1,\ldots, \omega_n$ which becomes an $A$-bimodule by
$\omega_i  a = \sum_{j=1}^n\sigma_{ij}(a)\omega_j$ for all $1\leq i \leq n$. The
map \begin{equation}d: A \to \Omega^1, \qquad  a \mapsto \sum_{i=1}^n
\partial_i(a) \omega_i\end{equation}
is a derivation and makes $(\Omega^1, d)$ a first order differential calculus on
$A$.

A map $\sigma : A \to M_n(A)$ can be equivalently understood as an element of
$M_n(\rend k A)$. Write $\bullet$ for the product in $M_n(\rend k A)$,  $\1$ for
the unit in $M_n(\rend k A)$ and $\sigma^T$ for the transpose of $\sigma$.
\begin{definition}\label{def.der.free}
Let $(\partial, \sigma)$ be a right twisted multi-derivation. We say that
$(\partial, \sigma)$ is {\em free}, provided there exist algebra maps $\bsi: A
\to M_n(A)$ and $\hsi: A \to M_n(A)$ such that
\begin{equation}\label{bar.sigma3}
\bsi \bullet \sigma^T = \1, \qquad \sigma^T\bullet\bsi = \1\, ,
\end{equation}
\begin{equation}\label{hat.sigma}
\hsi \bullet \bsi^T = \1, \qquad \bsi^T\bullet\hsi = \1\, .
\end{equation}
\end{definition}

Theorem \cite [Theorem 3.4]{BrzezinskiKaoutitLomp} showed that for
any free right twisted multi-derivation $(\partial, \sigma; \bsi
,\hsi)$ on $A$, and associated first order differential calculus
$(\Omega^1,d)$ with generators $\omega_i$, the map
\begin{equation}\label{def.hom.twist}
\nabla : \Hom{A}{\Omega^1}{A}\to A, \qquad f\mapsto \sum_i \partial_i^\sigma
\left( f\left(\omega_i\right)\right)\, .
\end{equation}
is a hom-conection, where $\partial_i^\sigma := \sum_{j,\,k}
\bsi_{kj}\circ\partial_j\circ \hsi_{ki}$, for each $i=1,2,\ldots, n$.
Moreover $\nabla$ had been shown to be unique with respect to the property that
$\nabla (\xi _i) =0$, for all $i = 1,2, \ldots , n$, where $\xi_i : \Omega^1\to
A$ are right $A$-linear maps defined by $\xi_i(\omega_j) = \delta_{ij}$, $i,j =
1,2,\ldots , n$.

We shall be mostly interested in right twisted multi-derivation
$(\partial,\sigma)$ that are upper triangular, for which
$\sigma_{ij}=0$ for all $i>j$ holds. It had been shown in
\cite[Proposition 3.3]{BrzezinskiKaoutitLomp} that an upper
triangular right twisted multi-derivation is free if and only if
$\sigma_{11}, \ldots, \sigma_{nn}$ are automorphisms of $A$.

%%%%%%%%%%%%%%%%%

\section{Differential calculi on quantum exterior algebras}
Let $A$ be a unital associative algebra over a field $K$. Given an
$A$-bimodule $M$ which is free as left and right $A$-module with
basis $\{ \omega_1, \ldots, \omega_n\}$ one defines the tensor
algebra of $M$ over $A$ as
\begin{equation}
T_A(M) = A \oplus M \oplus (M\otimes M) \oplus M^{\otimes 3} \oplus \cdots =
\bigoplus_{n=0}^\infty M^{\otimes n}
\end{equation}
which is a graded algebra whose product is the concatenation of tensors and
whose zero component is $A$.
Following \cite[I.2.1]{BrownGoodearl} we call an $n\times n$-matrix $Q=(q_{ij})$
over $K$ a
{\it multiplicatively antisymmetric matrix} if
$q_{ij}q_{ji}=q_{ii}=1$ for all $i,j$.  The {\it quantum exterior algebra} of
$M$ over $A$ with respect to a
multiplicatively antisymmetric matrix $Q$ is defined as
$$ \bigwedge\!\!{}^Q (M) := T_A(M)/\langle \omega_i\otimes \omega_j +
q_{ij}\omega_j\otimes \omega_i, \omega_i\otimes \omega_i \mid
i,j=1,\ldots, n\rangle.$$ This construction for a vector space
$M=V$ and a field $A=K$ appears in \cite{NaiduShroffWhitherspoon,
Ueyama}. The product of $\bigwedge^{Q}(M)$ is written as $\wedge$.
The quantum exterior algebra is a free left and right $A$-module
of rank $2^n$ with basis
$$ \{1\}\cup \{ \omega_{i_1} \wedge \omega_{i_2} \cdots \wedge \omega_{i_k} \mid
i_1 < i_2 < \cdots < i_k, \, 1\leq k\leq n\}.$$
Write $\mathrm{sup}(\omega_{i_1} \wedge \omega_{i_2} \cdots \wedge \omega_{i_k})
= \{i_1, i_2 \cdots, i_k\}$ for any basis element.
Given a bimodule derivation $d:A\rightarrow M$, we will examine when $d$ can be
extended to an exterior derivation of $\bigwedge^Q(M)$, i.e. to a graded map
$d:\bigwedge^Q(M)\rightarrow \bigwedge^Q(M)$ of degree $1$ such that $d^2=0$ and
such that the graded Leibniz rule is satisfied.

%\begin{equation}
%a\omega_i  = \sum_{j=1}^n \omega_j\bsi_{ji}(a) \qquad \forall a\in
%A, i=1,\ldots, n.
%\end{equation}

\begin{proposition}\label{ExtendingDiffStructToSkewExteriorAlgberas}
Let $(\partial, \sigma)$ be a right twisted multi-derivation of
rank $n$ on a $K$-algebra $A$ with associated FODC $(\Omega^1,d)$.
Let $Q$ be an $n\times n$ multiplicatively antisymmetric matrix over $k$. Then
$d:A\rightarrow \Omega^1$ can be extended to make
$\Omega=\bigwedge^Q(\Omega^1)$ an $n$-dimensional differential
calculus on $A$ with $d(\omega_i)=0$ for all $i=1, \ldots, n$ if
and only if
\begin{equation}\label{qcomm}
\partial_i\partial_j = q_{ji}\partial_j\partial_i,
\qquad \mbox{ and } \qquad
\partial_i\sigma_{kj}-q_{ji}\partial_j\sigma_{ki} =
q_{ji}\sigma_{kj}\partial_i-\sigma_{ki}\partial_j \qquad \forall i<j, \, \forall
k.
\end{equation}
\end{proposition}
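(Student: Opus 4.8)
The plan is to extend $d$ to $\Omega=\bigwedge^Q(\Omega^1)$ by declaring $d(\omega_i)=0$ and forcing the graded Leibniz rule, then to determine exactly when this prescription is well-defined and satisfies $d^2=0$. Concretely, since every element of $\Omega$ is an $A$-linear combination of the basis $k$-forms $\omega_{i_1}\wedge\cdots\wedge\omega_{i_k}$ with $i_1<\cdots<i_k$, the graded Leibniz rule together with $d(\omega_i)=0$ forces
\begin{equation}
d\bigl(a\,\omega_{i_1}\wedge\cdots\wedge\omega_{i_k}\bigr) = d(a)\wedge\omega_{i_1}\wedge\cdots\wedge\omega_{i_k}.
\end{equation}
First I would take this as the \emph{definition} of a $K$-linear degree-$1$ map on the underlying graded $K$-vector space $\Omega$, which is unambiguous because the $k$-forms form a left $A$-basis; the real content is to check that this map (i) descends to the quotient $\bigwedge^Q(\Omega^1)$, i.e. respects the defining relations $\omega_i\wedge\omega_j+q_{ij}\omega_j\wedge\omega_i$ and $\omega_i\wedge\omega_i$, equivalently that it is compatible with the bimodule relation $\omega_i a=\sum_j\sigma_{ij}(a)\omega_j$ that is already encoded in how a general element is rewritten in the basis; (ii) satisfies the graded Leibniz rule for arbitrary homogeneous arguments, not just the special factorisations above; and (iii) satisfies $d^2=0$.

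The key computation is (i)/(ii): I would take two one-forms written in the left-module form, say $\alpha=a\,\omega_i$ and an arbitrary $\omega_j a'$, and compare $d$ applied to the product $\alpha\wedge\omega_j a'$ computed two ways — first by associating as $(a\omega_i)\wedge(\omega_j a')$ and pushing $a'$ to the left through $\omega_j$ via $\sigma$, then by the Leibniz rule. Both the well-definedness on the quotient and the Leibniz consistency come down to demanding that $d$ commute appropriately with the rewriting rules; writing $d\omega_j=0$ and expanding $d(\sigma_{kj}(a)\,\omega_k)=\partial\bigl(\sigma_{kj}(a)\bigr)\wedge\omega_k$, the terms organise themselves and the requirement becomes precisely the second family of identities in \eqref{qcomm}, namely $\partial_i\sigma_{kj}-q_{ji}\partial_j\sigma_{ki}=q_{ji}\sigma_{kj}\partial_i-\sigma_{ki}\partial_j$ for $i<j$ and all $k$, after using that the two-form basis elements $\omega_k\wedge\omega_\ell$ with $k<\ell$ are linearly independent and $\omega_k\wedge\omega_\ell=-q_{k\ell}^{-1}\omega_\ell\wedge\omega_k=-q_{\ell k}\omega_\ell\wedge\omega_k$. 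For (iii), $d^2$ vanishes automatically on the $\omega_i$ and on $A$-multiples of higher forms once we know $d^2$ vanishes on $A$; but $d^2(a)=d\bigl(\sum_i\partial_i(a)\omega_i\bigr)=\sum_i\partial\bigl(\partial_i(a)\bigr)\wedge\omega_i=\sum_{i<j}\bigl(\partial_j\partial_i(a)-q_{ji}\partial_i\partial_j(a)\bigr)\,\omega_j\wedge\omega_i$ (plus the diagonal terms $\partial_i\partial_i(a)\,\omega_i\wedge\omega_i=0$), so $d^2=0$ is equivalent to the first family $\partial_i\partial_j=q_{ji}\partial_j\partial_i$ for all $i<j$. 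Conversely, assuming \eqref{qcomm}, one runs the same computations in reverse to see that the prescribed $d$ is well-defined, graded-Leibniz, and squares to zero, and that it is genuinely degree-raising with $\Omega^m=0$ for $m>n$ by construction of the quantum exterior algebra.

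I expect the main obstacle to be the bookkeeping in step (i)/(ii): because the extended $d$ is defined via the left-module basis while the bimodule structure mixes left and right actions through $\sigma$, one has to be careful that the Leibniz rule is checked on a generating set of products \emph{and} that this suffices — i.e. that consistency on products of the form $(a\omega_i)\wedge\eta$ for $\eta$ a basis form propagates to all products. The cleanest way to handle this is to verify Leibniz on the three types of generating relations of $\Omega$ as a $K$-algebra (the bimodule relation for $\Omega^1$, the $q$-commutation relations, and the squares), check that $d$ annihilates each relation, and invoke the universal property of the quotient; the price is that each of these checks unwinds into the identities in \eqref{qcomm}, and one must track the signs $(-1)^{|a|}$ and the scalars $q_{ij}$ simultaneously without error. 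Everything else — the degree count, $d|_A=d$, uniqueness of the extension given $d\omega_i=0$ — is then formal.
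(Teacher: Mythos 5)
Your plan is essentially the paper's own proof: necessity is obtained by comparing the two evaluations of $d(\omega_k a)$ (via the Leibniz rule with $d(\omega_k)=0$ versus via $\omega_k a=\sum_j\sigma_{kj}(a)\omega_j$) together with $d^2(a)=0$, and sufficiency by defining $d(a\omega):=d(a)\wedge\omega$ on the left $A$-basis of wedge monomials and then verifying $d^2=0$ and the graded Leibniz rule --- the paper carries out this last verification by induction on the degree of $\omega$, pushing the coefficient $b$ through the last factor $\omega_k$ and invoking \eqref{qcomm}, which is the same computation you describe, only organised as an induction rather than as a check on defining relations. The one slip is in your intermediate $d^2$ display: since $\omega_i\wedge\omega_j=-q_{ij}\,\omega_j\wedge\omega_i$, the coefficient of $\omega_j\wedge\omega_i$ for $i<j$ should be $\partial_j\partial_i(a)-q_{ij}\partial_i\partial_j(a)$ rather than $\partial_j\partial_i(a)-q_{ji}\partial_i\partial_j(a)$ (taken literally your formula would yield $\partial_i\partial_j=q_{ij}\partial_j\partial_i$), although the relation you ultimately state, $\partial_i\partial_j=q_{ji}\partial_j\partial_i$, is the correct one.
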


\begin{proof}
Suppose $d$ extends to make $\Omega$ a differential calculus on $A$ with
$d(\omega_i)=0$.
Then for all $a\in A$ and $k=1,\ldots,n$ the following equations hold:
\begin{equation}
d(\omega_k a) = d(\omega_k)a - \omega_k \wedge d(a)
= \sum_{j=1}^n -\omega_k\wedge \partial_j(a)\omega_j =
\sum_{i,j=1}^n -\sigma_{ki}(\partial_j(a)) \omega_i\wedge \omega_j
\end{equation}
\begin{equation}
d\left(\sum_{j=1}^n\sigma_{kj}(a)\omega_j\right)
= \sum_{i,j=1}^n \partial_i(\sigma_{kj}(a)) \omega_i \wedge \omega_j +
\sum_{j=1}^n\sigma_{kj}(a)d(\omega_j)
= \sum_{i,j=1}^n \partial_i(\sigma_{kj}(a)) \omega_i \wedge \omega_j
\end{equation}
Hence, as $\omega_ka = \sum_{j=1}^n\sigma_{kj}(a)\omega_j$ and
$\omega_j\wedge \omega_i = -q_{ji} \omega_i\wedge \omega_j$ for
$i<j$, we have
\begin{equation}
-\sigma_{ki}\partial_j + q_{ji} \sigma_{kj}\partial_i = \partial_i\sigma_{kj} -
q_{ji}\partial_j\sigma_{ki}
\qquad \forall i<j
\end{equation}
Furthermore $d^2=0$ implies for all $a\in A$:
\begin{equation}
0 = d^2(a) = \sum_{i,j=1}^n \partial_i\partial_j(a) \omega_i \wedge \omega_j =
\sum_{i<j}
(\partial_i\partial_j-q_{ji}\partial_j\partial_i)(a) \omega_i \wedge \omega_j,
\end{equation}
which shows $\partial_i\partial_j = q_{ji}\partial_j\partial_i$, for $i<j$.

On the other hand if (\ref{qcomm}) holds, then set for any homogeneous element
$a\omega\in\Omega^m$ with $a\in A$ and $\omega=\omega_{j_1}\wedge
\omega_{j_2}\wedge\cdots\wedge\omega_{j_m}$, with $j_1<j_2 <\cdots < j_m$, a
basis element of $\Omega^m$:
\begin{equation}
d(a\omega) := d(a)\wedge \omega = \sum_{i=1}^n \partial_i(a)\omega_i \wedge
\omega_{j_1}\wedge \omega_{j_2}\wedge\cdots\wedge\omega_{j_m}.
\end{equation}
We will show that $d:\Omega\rightarrow \Omega$ in that way, will satisfy $d^2=0$
and the graded Leibniz rule.
For any $a\omega\in \Omega^m$ as above:
\begin{equation}
d^2(a\omega) = \sum_{i,j=1}^n \partial_i\partial_j(a) \omega_i \wedge \omega_j
\wedge \omega =
 \sum_{i<j}^n (\partial_i\partial_j-q_{ji}\partial_j\partial_i)(a) \omega_i
\wedge \omega_j \wedge \omega = 0
\end{equation}
Since (\ref{twisted_relation}) implies that $\partial_i(1)=\sum_{j}\partial_j(1)\sigma_{ji}(1)+\partial_i(1) = 2\partial_i(1)$, as  $\sigma_{ji}(1)=0$ if $i\neq j$, we have $\partial_i(1) = 0$ and hence$d(\omega_i)=d(1)\wedge \omega_i = 0$ for all $i$.

We prove the graded Leibniz rule
\begin{equation}\label{gradedLeibniz}
d(a\omega \wedge b\nu)=d(a\omega)\wedge b\nu + (-1)^m a\omega \wedge d(b\nu)
\end{equation}
inductively on the grade of $\omega$, where
$\omega=\omega_{j_1}\wedge \cdots \wedge \omega_{j_m}$ and
$\nu=\omega_{i_1}\wedge \cdots \wedge \omega_{i_k}$ are basis
elements of $\Omega$ and $a,b\in A$. For a $m=0$, ie. $a\omega =
a$, equation (\ref{gradedLeibniz}) follows from the definition and
$d(\nu)=0$. Let $m>0$ and suppose that (\ref{gradedLeibniz}) has
been proven for all basis elements $\omega$ of grade $|\omega|\leq
m-1$. Let $\omega$ be a basis element with $|\omega|=m$ and write
$\omega=\omega'\wedge \omega_k$.
\begin{eqnarray*}
d(a\omega\wedge b\nu) &=& d(a\omega'\wedge \omega_k \wedge b\nu) \\
&=& \sum_{j=1}^n d\left(a\omega'\wedge \sigma_{kj}(b) \omega_j \wedge \nu\right)
\\\label{induction_hypoth}
&=& \sum_{j=1}^n d(a\omega')\wedge \sigma_{kj}(b) \omega_j \wedge \nu +
(-1)^{m-1} \sum_{j=1}^n a\omega' \wedge
d\left(\sigma_{kj}(b) \omega_j \wedge \nu\right) \\
&=& d(a\omega')\wedge \omega_k \wedge b\nu + (-1)^{m-1} a\omega' \wedge
\sum_{i,j=1}^n \partial_i(\sigma_{kj}(b)) \omega_i\wedge  \omega_j \wedge \nu
\\\label{eq_gr_Leibniz_1}
&=& d(a\omega) \wedge b\nu - (-1)^{m} a\omega' \wedge  \sum_{i<j} \left[
\partial_i(\sigma_{kj}(b)) - q_{ji}
\partial_j(\sigma_{ki}(b))\right] \omega_i\wedge  \omega_j \wedge \nu
\\\label{eq_gr_Leibniz_2}
&=& d(a\omega) \wedge b\nu + (-1)^{m} a\omega' \wedge  \sum_{i<j} \left[
\sigma_{ki}(\partial_j(b)) - q_{ji}
\sigma_{kj}(\partial_i(b))\right] \omega_i\wedge  \omega_j \wedge \nu \\
&=& d(a\omega) \wedge b\nu + (-1)^{m} a\omega' \wedge  \sum_{i,j=1}^n
\sigma_{ki}(\partial_j(b)) \omega_i\wedge  \omega_j \wedge \nu \\
&=& d(a\omega) \wedge b\nu + (-1)^{m} a\omega' \wedge  \omega_k \wedge
\sum_{j=1}^n \partial_j(b)\omega_j \wedge \nu \\
&=& d(a\omega)\wedge b\nu + (-1)^{m} a\omega \wedge d(b\nu)
\end{eqnarray*}
which shows the graded Leibniz rule, where the induction hypothesis has been
used in the third line and where (\ref{qcomm}) has been used in the sixth line .
\end{proof}

Suppose that $(\partial, \sigma)$ is a free right twisted multi-derivation satisfying the equations $(\ref{qcomm})$ and that $(\Omega,d)$
is the associated $n$-dimensional differential calculus over $A$ for some
$n\times n$ matrix $Q$. Then, as mentioned above, $\nabla: \Hom{A}{\Omega^1}{A}
\rightarrow A$ with
$\nabla(f) = \sum_{i=1}^n \partial_i^\sigma(f(\omega_i))$ for all $f\in
\Hom{A}{\Omega^1}{A}$ is hom-connection.
For each $1\leq m < n$ one defines also
$\nabla_{m}:\Hom{A}{\Omega^{m+1}}{A} \longrightarrow \Hom{A}{\Omega^{m}}{A}$
with
\begin{equation} \nabla_{m}(f)(u) = \nabla(fu) +
(-1)^{m+1} f(d(u)), \qquad \forall f\in \Hom{A}{\Omega^{m+1}}{A},
u\in \Omega^m,\end{equation} where $fu\in \Hom{A}{\Omega^{1}}{A}$
is defined by $fu(v)=f(u\wedge v)$ for all $v \in \Omega^1$. As
every element $u\in \Omega^m$ can be uniquely written as a right
$A$-linear combination of basis elements
$\omega=\omega_{i_1}\wedge \cdots \wedge \omega_{i_m}$ and since
$\nabla_m(f)$ is right $A$-linear and furthermore by Proposition
\ref{ExtendingDiffStructToSkewExteriorAlgberas} $d(\omega)=0$ is
satisfied, we conclude that for $u=\omega a$:
\begin{equation} \nabla_{m}(f)(\omega a) = \nabla_{m}(f)(\omega)a =
\nabla(f\omega)a + (-1)^{m+1} f(d(\omega))a = \nabla(f\omega)a\end{equation}
holds. If $\partial_i^\sigma(1)=0$ for all $i$, the hom-connection is flat,
because for any dual basis element $f=\beta_{s,t} \in \Hom{A}{\Omega^2}{A}$ with
$s<t$, i.e. $\beta_{s,t}(\omega_i\wedge \omega_j) = \delta_{s,i}\delta_{t,j}$
one has
$$
\nabla(\nabla_{1}(f)) = \sum_{i=1}^n {\partial_i^\sigma}(\nabla_1(f)(\omega_i))
= \sum_{i=1}^n {\partial_i^\sigma}(\nabla(f\omega_i)) = \sum_{i=1}^n\sum_{j=1}^n
{\partial_i^\sigma}({\partial_j^\sigma}(f(\omega_i\wedge \omega_j)))=
{\partial_s^\sigma}( {\partial_t^\sigma}(1)) = 0.
$$

Set $\Omega^* = \Hom{A}{\Omega}{A} = \bigoplus_{m=0}^n
\Hom{A}{\Omega^m}{A}$ and note that $\nabla$ induces a map of
degree $-1$ on $\Omega^*$. We want to establish an isomorphism
between the de Rham complex given by $d:\Omega \rightarrow \Omega$
and the integral complex given by $\nabla: \Omega^* \rightarrow
\Omega^*$. More precisely we are looking for a bijective chain map
$\Theta: (\Omega, d) \rightarrow (\Omega^*, \nabla)$ such that the
following diagram commutes:

\[\begin{CD}\label{diagram}
A               @>d>> \Omega^1         @>d>> {\cdots} @>d>> \Omega^{n-1} @>d>>
\Omega^n\\
@V{\Theta_0}VV        @V{\Theta_1}VV           @.   @V{\Theta_{n-1}}VV
@V{\Theta_n}VV\\
\Hom{A}{\Omega^n}{A} @>>\nabla_{n-1}> \Hom{A}{\Omega^{n-1}}{A}
@>>\nabla_{n-2}> {\cdots} @>>\nabla_{1}> \Hom{A}{\Omega^1}{A} @>>\nabla > A
\end{CD}\]

One attempt is to define the maps $\Theta_m$ via the dual basis element of
$\Omega^n$.
Define $$\topform = \omega_1 \wedge \cdots \wedge \omega_n \in \Omega^n$$ for
the base element of $\Omega^n$.
Let $\beta \in {\Omega^n}^*$ be the dual basis of $\Omega^n$ as a right
$A$-module, i.e. $ \beta(\topform a) = a$ for all $a\in A$.
For any $0\leq m < n$ define $\Theta_m: \Omega^m \longrightarrow
\Hom{A}{\Omega^{n-m}}{A}$ through
$\Theta_m(v) =  (-1)^{m(n-1)}\beta v$ for all $v\in \Omega^m$. Note that
$\Theta_n=\beta$. Moreover the maps $\Theta_m$ are right $A$-linear taking into
account the right $A$-module structure of $\Hom{A}{\Omega^{n-m}}{A}$, namely for
$a\in A, v\in \Omega^m$ and $w\in \Omega^{n-m}$:
$$\Theta_m(va)(w) = (-1)^{m(n-1)}\beta(va\wedge w) = (-1)^{m(n-1)}\beta(v \wedge
aw) = \Theta_m(v)(aw) = (\Theta_m(v)a)(w).$$
Hence $\Theta_m(va)=\Theta_m(v)a$.

For a certain class of twisted multi-derivations, extended to a
quantum exterior algebra, we will show that the maps $\Theta_m$
are always isomorphisms. We say that a twisted multi-derivation
$(\partial, \sigma)$ on an algebra $A$ is upper triangular if
$\sigma_{ij}=0$ for all $i>j$. By \cite[Proposition
3.3]{BrzezinskiKaoutitLomp} any upper triangular twisted
multi-derivation is free if and only if $\sigma_{ii}$ are
automorphisms of $A$ for all $i$. The corresponding maps $\bsi$
and $\hsi$ are defined inductively by $\bsi_{ii}=\sigma_{ii}^{-1}$
for all $i$,
$\bsi_{ij}=-\sum_{k=j}^{i-1}\sigma_{ii}^{-1}\sigma_{ki}\bsi_{kj}$
for all $i>j$ and $\bsi_{ij}=0$ for $i<j$. The map $\hsi$ is
defined analogously using $\bsi$.

\begin{theorem}\label{uppertriangular}
 Let $(\partial, \sigma)$ be a free upper triangular twisted multi-derivation on
$A$ with associated FODC $(\Omega^1,d)$.
 Suppose that $d:A\rightarrow \Omega^1$ can be extended to an $n$-dimensional
differential calculus
 $(\Omega, d)$ where $\Omega=\bigwedge^Q (\Omega^1)$ is the quantum exterior
algebra of $\Omega^1$ for some matrix $Q$.
 Then the following hold:
 \begin{enumerate}
  \item $\topform a = \bdet{\sigma}(a) \: \topform$, for all $a \in A$, where
$\bdet{\sigma}=\sigma_{11} \circ \cdots \circ \sigma_{nn}$.
  \item The maps $\Theta_m: \Omega^m \rightarrow \Hom{A}{\Omega^{n-m}}{A}$ given
by $\Theta_m(v)= (-1)^{m(n-1)}\beta v$ for all $v\in\Omega^m$ are
isomorphisms of right $A$-modules.
\item Moreover if
\begin{equation}\label{poincare-cond}
\partial_i^\sigma = \left(\prod_j q_{ij}\right) \bdet{\sigma}^{-1} \partial_i
\bdet{\sigma} \qquad \forall i=1,\ldots, n
\end{equation}
holds, then $\Theta = (\Theta_m)_{m=0}^n$ is a chain map, that is,
$A$ satisfies the strong Poincar\'e duality with respect to
$(\Omega, d)$ in the sense of T.Brzezinski.
 \end{enumerate}
\end{theorem}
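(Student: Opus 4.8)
The plan is to prove (1), (2), (3) in that order, each feeding the next. For (1), I would push $a$ leftwards through $\topform=\omega_1\wedge\cdots\wedge\omega_n$ one generator at a time, using $\omega_i a=\sum_{j\ge i}\sigma_{ij}(a)\omega_j$ (upper triangularity): when $a$ is moved past $\omega_i$, the off-diagonal summands $\sigma_{ij}(a)\omega_j$ with $j>i$ are wedged against the block $\omega_{i+1}\wedge\cdots\wedge\omega_n$ already sitting to their right and so vanish, leaving only the diagonal term; an induction then gives $\topform a=(\sigma_{11}\circ\cdots\circ\sigma_{nn})(a)\topform=\bdet{\sigma}(a)\topform$. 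Since $\bdet{\sigma}$ is a composite of automorphisms it is an automorphism; hence $\topform$ is genuinely a free right $A$-basis of $\Omega^n$, so $\beta$ is well defined, and $z\topform=\topform\,\bdet{\sigma}^{-1}(z)$, whence $\beta(z\topform)=\bdet{\sigma}^{-1}(z)$ for every $z\in A$ --- the device that converts left into right $A$-actions throughout.

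For (2), I would first record that for $|I|=m$ and $|J|=n-m$ one has $\omega_I\wedge\omega_J=0$ unless $J=I^{c}$, while $\omega_I\wedge\omega_{I^{c}}=\mu_I\topform$ with $\mu_I\in K^{\times}$ the (nonzero, since no index repeats) product of the scalars $-q_{ab}$ accumulated in reordering. Writing $v\in\Omega^m$ as a \emph{left} combination $v=\sum_{|I|=m}a_I\omega_I$ and applying (1),
\[
\Theta_m(v)(\omega_K)=(-1)^{m(n-1)}\beta(v\wedge\omega_K)=(-1)^{m(n-1)}\mu_{K^{c}}\,\bdet{\sigma}^{-1}(a_{K^{c}}).
\]
Each coordinate $a_{K^{c}}\mapsto(-1)^{m(n-1)}\mu_{K^{c}}\bdet{\sigma}^{-1}(a_{K^{c}})$ is a bijection of $A$, and $\{\omega_K\}_{|K|=n-m}$ is a right $A$-basis of $\Omega^{n-m}$, so $v\mapsto\bigl(\Theta_m(v)(\omega_K)\bigr)_K$ is a bijection $\Omega^m\to A^{\binom nm}$; this says exactly that $\Theta_m$ is bijective, and being right $A$-linear it is an isomorphism of right $A$-modules.

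For (3), I would unwind the definitions of $\Theta_m$, $\nabla_{n-m-1}$ and $\nabla$ (using right $A$-linearity of $\beta$ and $\wedge$, and $K$-linearity of the $\partial_i^{\sigma}$): commutativity of the $m$-th square at $v\in\Omega^m$, $u\in\Omega^{n-m-1}$ becomes
\[
\sum_{i=1}^n\partial_i^{\sigma}\!\bigl(\beta(v\wedge u\wedge\omega_i)\bigr)+(-1)^{n-m}\beta(v\wedge du)=(-1)^{n-1}\beta(dv\wedge u).
\]
Inserting the graded Leibniz rule $d(v\wedge u)=dv\wedge u+(-1)^{m}v\wedge du$ cancels the $\beta(dv\wedge u)$-terms and leaves, with $\eta:=v\wedge u\in\Omega^{n-1}$, the single identity
\[
(\star)\qquad\sum_{i=1}^n\partial_i^{\sigma}\!\bigl(\beta(\eta\wedge\omega_i)\bigr)=(-1)^{n+1}\beta(d\eta),
\]
which no longer mentions $m$; and for each $m$ the elements $v\wedge u$ span $\Omega^{n-1}$ over $K$, so \emph{all} squares commute exactly when $(\star)$ holds for all $\eta\in\Omega^{n-1}$.

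It then remains to show $(\star)$ is equivalent to hypothesis (\ref{poincare-cond}). By $K$-linearity in $\eta$ it suffices to test $\eta=b\,\omega_{\hat k}$ with $b\in A$, where $\omega_{\hat k}=\omega_1\wedge\cdots\wedge\widehat{\omega_k}\wedge\cdots\wedge\omega_n$ runs over the left $A$-basis of $\Omega^{n-1}$. Here $d(b\omega_{\hat k})=\partial_k(b)\,\omega_k\wedge\omega_{\hat k}=c_k\,\partial_k(b)\,\topform$ and $\omega_{\hat k}\wedge\omega_k=c_k'\,\topform$ for explicit $c_k,c_k'\in K^{\times}$, so by (1) the right side of $(\star)$ is $(-1)^{n+1}c_k\,\bdet{\sigma}^{-1}(\partial_k(b))$ and the left side is $c_k'\,\partial_k^{\sigma}(\bdet{\sigma}^{-1}(b))$; equality for all $b$ reads $\partial_k^{\sigma}=(-1)^{n+1}(c_k/c_k')\,\bdet{\sigma}^{-1}\partial_k\,\bdet{\sigma}$, and the numeric identity $(-1)^{n+1}c_k/c_k'=\prod_{j\ne k}q_{kj}=\prod_j q_{kj}$ (using $q_{kk}=1$) identifies this with (\ref{poincare-cond}). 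Hence (\ref{poincare-cond}) forces $(\star)$, all squares commute, and $\Theta=(\Theta_m)_{m=0}^n$ is a bijective chain map by (2), i.e. the strong Poincar\'e duality holds. I expect the only genuine difficulty to be the sign/scalar bookkeeping: tracking the weights $(-1)^{m(n-1)}$ from $\Theta_m$, the $(-1)^{m+1}$ inside $\nabla_m$, the graded Leibniz signs, and the reordering scalars $c_k,c_k'$, while systematically using (1) to shuttle elements of $A$ past $\topform$.
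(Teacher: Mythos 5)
Your proposal is correct and, at its core, runs on the same engine as the paper's proof: the same maps $\Theta_m$, the key identity $\beta(z\topform)=\bdet{\sigma}^{-1}(z)$ extracted from part (1), and the same basis-element bookkeeping with complementary index sets and reordering scalars. There are two organizational differences worth recording. For (1) you push $a$ leftwards through $\topform$ using $\sigma$ directly, killing the off-diagonal terms $\sigma_{ij}(a)\omega_j$ ($j>i$) against the block to their right; the paper instead computes $a\topform=\topform\,\bdet{\sigma}^{-1}(a)$ by pushing $a$ rightwards with the lower-triangular inverse $\bsi$. The two are equivalent, and yours avoids introducing $\bsi$ at all. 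For (3) the paper checks each square $\Theta_{m+1}\circ d=\nabla_{n-m-1}\circ\Theta_m$ directly on pairs of basis elements, using $d(\nu)=0$ to discard the $f(d\nu)$ term of $\nabla_{n-m-1}$; you instead allow a general second argument $u$, cancel via the graded Leibniz rule, and collapse all squares into the single $m$-independent identity $(\star)$ on $\Omega^{n-1}$, which you then test on the basis $b\,\omega_{\hat k}$. This buys a cleaner separation of where hypothesis (\ref{poincare-cond}) enters (it is exactly the coordinate form of $(\star)$), at the cost of slightly more sign-juggling up front; your scalar identity $(-1)^{n+1}c_k/c_k'=\prod_j q_{kj}$ checks out against the relation $\omega_i\wedge\omega_j=-q_{ij}\,\omega_j\wedge\omega_i$. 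Both routes are sound.
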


\begin{proof}
(1)  By the definition of the bimodule structure of $\bigwedge^Q(\Omega^1)$
and by the fact that $\bsi$ is lower triangular we have
$$a\topform = \sum_{j_n\geq n} \cdots \sum_{j_1\geq 1} \omega_{j_1}\wedge \cdots
\wedge \omega_{j_n} \bsi_{n j_n}\circ \cdots \circ \bsi_{1 j_1}(a).$$
By the definition of the quantum exterior algebra the non-zero terms
$\omega_{j_1}\wedge \cdots \wedge \omega_{j_n}$ must have distinct indices,
i.e. $j_k \neq j_l$ for all $k\neq l$. In particular $j_n=n$ and hence
inductively we can conclude that $j_i=i$ for all $i$. This shows that $a\topform
= \topform \bdet{\sigma}^{-1}(a)$.

(2) For every basis element of $\omega = \omega_{i_1}\wedge \cdots \wedge
\omega_{i_{n-m}}$ of $\Omega^{n-m}$,
there exists a unique complement basis element $\omega' = \omega'_{j_1} \wedge
\cdots \wedge \omega'_{j_{m}}$ of $\Omega^{m}$
such that $\omega' \wedge \omega  \neq 0$. Let $C_\omega$ be the non-zero scalar
such that $\omega'\wedge \omega = C_{\omega} \topform$
Let $f\in\Hom{A}{\Omega^{n-m}}{A}$ be any non-zero element and set
$$a_\omega = (-1)^{m(n-1)}C_{\omega}^{-1}\bdet{\sigma}(f(\omega))$$ for any
basis element $\omega\in\Omega^{n-m}$.
Set $v= \sum a_{\omega} \omega'$. Then
$$ \Theta_m(v)(\omega) =(-1)^{m(n-1)} \beta(a_\omega \omega'\wedge \omega)
=(-1)^{m(n-1)} \beta( {a_\omega C_\omega \topform}) =
\bdet{\sigma}^{-1}(\bdet{\sigma}(f(\omega)) = f(\omega).$$ Hence
$\Theta_m(v)=f$, which shows that $\Theta_m$ is surjective. To
prove injectivity, assume that $v=\sum a_{\omega} \omega' \in
\Omega^m$ is an element such that $\Theta_m(v)$ is the zero
function. Then for any basis element $\omega \in \Omega^{n-m}$,
one has
$$\Theta_m(v)(\omega) = (-1)^{m(n-1)} \beta( a_{\omega} \omega' \wedge \omega)
= (-1)^{m(n-1)}C_\omega \bdet{\sigma}^{-1}(a_\omega) = 0$$
which implies $a_\omega$ to be zero. Thus $v=0$ and $\Theta_m$ is an
isomorphism.

(3) We will show that $(\Theta_m)_m$ is a chain map, i.e. that
$\Theta_{m+1}\circ d = \nabla_{n-m-1}\circ \Theta_m$.
Let $\omega=\omega_{j_1}\wedge \cdots\wedge \omega_{j_m}$ be a basis element of
$\Omega^m$ and let $a\in A$.
For any basis element $\nu=\omega_{k_1}\wedge \cdots \wedge
\omega_{k_{n-m-1}}\in \Omega^{n-m-1}$ we have
$$ \Theta_{m+1}(d(a\omega))(\nu) = (-1)^{(m+1)(n-1)}\sum_{i=1}^n \beta(
\partial_i(a)\omega_i\wedge \omega \wedge \nu).$$
On the other hand
$$\nabla_{n-m-1}(\Theta_m(a\omega))(\nu) = (-1)^{m(n-1)}
\nabla(\beta(a\omega\wedge \nu) = (-1)^{m(n-1)}\sum_{i=1}^n
\partial_i(\beta(a\omega\wedge \nu \wedge \omega_i)),$$ as
$d(\nu)=0$. Note that $\Theta_{m+1}(d(a\omega))(\nu)=0$ and
$\nabla_{n-m-1}(\Theta_m(a\omega))(\nu)=0$ if
$\mathrm{sup}(\omega)\cap \mathrm{sup}(\nu) \neq \emptyset$. Hence
suppose that $\omega$ and $\nu$ have disjoint support. Then there
exists a unique index $i$ that does not belong to
$\mathrm{sup}(\omega)\cup \mathrm{sup}(\nu)$. Let $C$ be the
constant such that
$$ \omega\wedge \nu \wedge \omega_i = C \topform.$$
Recall also that by the definition of the quantum exterior algebra we have:
$$\omega_i \wedge \omega\wedge \nu = \left( \prod_{j\neq i} -q_{ij}\right )
\omega \wedge \nu  \wedge \omega_i = (-1)^{n-1}
C\left( \prod_j q_{ij}\right ) \topform.$$
Note that hypothesis (\ref{poincare-cond}) is moreover equivalent to
\begin{equation}
 \partial_i^\sigma \circ \bdet{\sigma}^{-1}  = \left(\prod_j
q_{ij}\right) \bdet{\sigma}^{-1} \circ\partial_i
\end{equation}
These equations yield now the following:
\begin{eqnarray*}
\Theta_{m+1}(d(a\omega))(\nu) &=&
(-1)^{(m+1)(n-1)} \beta( \partial_i(a)\omega_i\wedge \omega\wedge \nu)\\
&=& (-1)^{m(n-1)} C \left(\prod_j q_{ij}\right)  \beta( \partial_i(a)
\topform)\\
&=& (-1)^{m(n-1)} C \left(\prod_j q_{ij}\right)
\bdet{\sigma}^{-1}(\partial_i(a)) \\
&=& (-1)^{m(n-1)} C\: \partial_i^\sigma\left(\bdet{\sigma}^{-1} (a)\right)\\
&=& \partial_i^\sigma \left( (-1)^{m(n-1)} C\: \beta(a \topform) \right)\\
&=& \partial_i^\sigma \left((-1)^{m(n-1)} \beta(a\omega\wedge\nu\wedge
\omega_i)\right)
%= \nabla((-1)^{m(n-1)}\beta(a\omega\wedge\nu))
=  \nabla_{n-m-1}\left(\Theta_m(a\omega)(\nu)\right)
\end{eqnarray*}
Thus $\Theta_{m+1}\circ d = \nabla_{n-m-1}\circ \Theta_m$. Hence
$\Theta$ is a chain map between the de Rham and the integral
complexes of right $A$-modules.

\end{proof}

\begin{remark}
 Let  $(\partial, \sigma)$  be an upper-triangular twisted multi-derivation of
rank $n$ on $A$ and let $Q$ be an $n\times n$ matrix with
 $q_{ij}q_{ji}=q_{ii}=1$. The conditions to extend the multi-derivations to the
quantum exterior algebra $\Omega=\bigwedge^Q(\Omega^1)$
 such that the complex of integral forms on $A$ and the de Rham complex are isomorphic with respect to $(\Omega, d)$ are:
 \begin{enumerate}
  \item $\sigma_{ii}$ is an automorphism of $A$ for all $i$;
  \item $\partial_i\partial_j = q_{ji}\partial_j\partial_i$ for all $i<j$;
  \item $\partial_i\sigma_{kj} - q_{ji}\sigma_{kj}\partial_i =
q_{ji}\partial_j\sigma_{ki} -\sigma_{ki}\partial_j$ for all
  $i<j$ and all $k$;
  \item $\partial_i^\sigma = \left(\prod_j q_{ij}\right) \bdet{\sigma}^{-1}
\partial_i  \bdet{\sigma}$ for all $i$.
 \end{enumerate}
\end{remark}

%%%%%%%%%%%%%% end
\section{Differential calculi from skew derivations}

The simplest bimodule structure on $\Omega^1 = A^n$ is a {\it
diagonal} one, i.e. if  $\sigma_{ij}= \delta_{ij} \sigma_i$ for
all $i,j$ where $\sigma_1, \ldots, \sigma_n$ are endomorphisms of
$A$. Moreover if $\sigma$ is diagonal and $(\partial,\sigma)$ is a
right twisted multi-derivation  on $A$, then the maps $\partial_i$
are right $\sigma_i$-derivations, i.e. for all $a,b\in A$ and $i$:
\begin{equation}
\partial_i(ab)=\partial_i(a)\sigma_i(b) + a\partial_i(b).
\end{equation}
Conversely, given any right $\sigma_i$-derivations $\partial_i$ on
$A$, for $i=1,\ldots, n$ one can form a corresponding diagonal
twisted multi-derivation $(\partial,\sigma)$ on $A$. Such {\it
diagonal} twisted multi-derivation $(\partial,\sigma)$ is free if
and only if the maps $\sigma_1, \ldots, \sigma_n$ are
automorphisms. The associated $A$-bimodule structure on
$\Omega^1=A^n$ with left $A$-basis $\omega_1, \ldots, \omega_n$ is
given by $\omega_i a = \sigma_i(a)\omega_i$ for all $i$ and $a\in
A$. From Proposition
\ref{ExtendingDiffStructToSkewExteriorAlgberas} we obtain the
following corollary for diagonal bimodule structures.

\begin{corollary}\label{extend_diff_structure_cor}\label{
generalIsoDeRhamIntegral}  Let $A$ be an algebra over a field $K$,
$\sigma_i$ automorphisms and $\partial_i$  right $\sigma_i$-skew derivations on
$A$, for $i=1,\ldots, n$ and let $(\Omega^1,d)$ be the associated first order
differential calculus on $A$.
\begin{enumerate}
 \item The derivation $d:A\rightarrow \Omega^1$ extends to an $n$-dimensional
differential calculus ($\Omega, d)$ where
 $\Omega=\bigwedge^Q(\Omega^1)$ is the quantum exterior algebra with respect to
some $Q$  such that $d(\omega_i)=0$ for all $i=1,\ldots, n$ if and only if
\begin{equation}\label{qcomm_diag}
\partial_i\sigma_j = q_{ji}\sigma_j\partial_i\qquad  \mbox{ and }
\qquad \partial_i\partial_j = q_{ji}\partial_j\partial_i \qquad \forall
i<j\end{equation}
\item If $\partial_i\sigma_j = q_{ji}\sigma_j\partial_i$ for all $i,j$ and
$\partial_i\partial_j = q_{ji}\partial_j\partial_i$ for all $i<j$,
then the de Rham and the integral complexes on $A$ are isomorphic
relative to $(\Omega, d)$.
\end{enumerate}
\end{corollary}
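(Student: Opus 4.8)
The plan is to derive Corollary \ref{generalIsoDeRhamIntegral} from the two main results already proven, namely Proposition \ref{ExtendingDiffStructToSkewExteriorAlgberas} and Theorem \ref{uppertriangular}, by specialising to the diagonal case $\sigma_{ij}=\delta_{ij}\sigma_i$. Part (1) should be immediate: I would simply substitute $\sigma_{kj}=\delta_{kj}\sigma_j$ into the two families of equations \eqref{qcomm}. The first family $\partial_i\partial_j=q_{ji}\partial_j\partial_i$ for $i<j$ carries over verbatim. For the second family $\partial_i\sigma_{kj}-q_{ji}\partial_j\sigma_{ki}=q_{ji}\sigma_{kj}\partial_i-\sigma_{ki}\partial_j$, the substitution kills all terms except those with $k=j$ on the left-first/right-first slots and $k=i$ on the others; since a diagonal matrix is in particular upper triangular, freeness here is equivalent to all $\sigma_i$ being automorphisms, which is assumed. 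I would note that the surviving relation, after relabelling, is exactly $\partial_i\sigma_j=q_{ji}\sigma_j\partial_i$ for $i<j$ — though one must be slightly careful, as the single equation \eqref{qcomm} with $k=j$ gives $\partial_i\sigma_j=q_{ji}\sigma_j\partial_i$ while with $k=i$ it gives $-q_{ji}\partial_j\sigma_i=-\sigma_i\partial_j$, i.e. $\partial_j\sigma_i=q_{ji}^{-1}\sigma_i\partial_j=q_{ij}\sigma_i\partial_j$, which is the same relation with the roles of $i,j$ swapped. So the family of conditions ``$\partial_i\sigma_j=q_{ji}\sigma_j\partial_i$ for all $i<j$'' as written is correct, and this establishes (1).

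For part (2) the hypotheses are strengthened to $\partial_i\sigma_j=q_{ji}\sigma_j\partial_i$ for \emph{all} $i,j$ (not just $i<j$) — this in particular forces $\partial_i\sigma_i=q_{ii}\sigma_i\partial_i=\sigma_i\partial_i$, a genuine extra constraint that the FODC-extension condition in (1) does not impose. Given this, a diagonal free twisted multi-derivation is a special case of an upper triangular free one, so Theorem \ref{uppertriangular}(2) already gives that the $\Theta_m$ are right $A$-module isomorphisms. It remains only to check that the Poincaré-duality condition \eqref{poincare-cond}, namely $\partial_i^\sigma=\left(\prod_j q_{ij}\right)\bdet{\sigma}^{-1}\partial_i\bdet{\sigma}$, holds automatically. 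Here $\bdet{\sigma}=\sigma_1\circ\cdots\circ\sigma_n$ and, since in the diagonal upper-triangular case $\bsi_{ii}=\sigma_i^{-1}$ and all off-diagonal $\bsi_{ij}$ vanish (as one checks from the inductive formula $\bsi_{ij}=-\sum_{k=j}^{i-1}\sigma_{ii}^{-1}\sigma_{ki}\bsi_{kj}$, whose summand $\sigma_{ki}=0$ for $k<i$), likewise $\hsi_{ij}=\delta_{ij}\sigma_i$. Hence $\partial_i^\sigma=\sum_{j,k}\bsi_{kj}\circ\partial_j\circ\hsi_{ki}=\bsi_{ii}\circ\partial_i\circ\hsi_{ii}=\sigma_i^{-1}\circ\partial_i\circ\sigma_i$.

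So the computation reduces to verifying
\[
\sigma_i^{-1}\,\partial_i\,\sigma_i \;=\; \Bigl(\textstyle\prod_j q_{ij}\Bigr)\,(\sigma_1\cdots\sigma_n)^{-1}\,\partial_i\,(\sigma_1\cdots\sigma_n),
\]
which I would do by pushing each $\sigma_\ell$ ($\ell\neq i$) past $\partial_i$ using the relation $\partial_i\sigma_\ell=q_{\ell i}\sigma_\ell\partial_i$: moving all of $\sigma_1,\dots,\sigma_{i-1},\sigma_{i+1},\dots,\sigma_n$ to the left through $\partial_i$ produces a scalar factor $\prod_{\ell\neq i}q_{\ell i}=\prod_{\ell\neq i}q_{i\ell}^{-1}$, and since $q_{ii}=1$ this equals $\left(\prod_j q_{ij}\right)^{-1}$; the remaining $\sigma_i$ commutes into place and cancels against $\sigma_i^{-1}$ on the other side. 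I also need $\partial_i^\sigma(1)=0$ to invoke flatness of the hom-connection and the chain-map conclusion; but $\partial_i^\sigma(1)=\sigma_i^{-1}(\partial_i(\sigma_i(1)))=\sigma_i^{-1}(\partial_i(1))=0$, since $\partial_i(1)=0$ was established inside the proof of Proposition \ref{ExtendingDiffStructToSkewExteriorAlgberas}. With \eqref{poincare-cond} verified and the $\Theta_m$ known to be isomorphisms, Theorem \ref{uppertriangular}(3) yields that $\Theta=(\Theta_m)$ is a chain isomorphism, i.e. the de Rham and integral complexes are isomorphic relative to $(\Omega,d)$. The only place demanding any care is the bookkeeping of $q$-factors and of the distinction between the two hypotheses in (1) and (2); there is no real obstacle beyond that, since the heavy lifting is done by Theorem \ref{uppertriangular}.
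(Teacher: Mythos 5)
Your proposal follows exactly the same route as the paper: part (1) is the one\-/line specialisation of \eqref{qcomm} to $\sigma_{kj}=\delta_{kj}\sigma_j$, and part (2) consists of computing $\partial_i^\sigma=\sigma_i^{-1}\partial_i\sigma_i=\partial_i$ and combining it with $\partial_i\bdet{\sigma}=\left(\prod_j q_{ji}\right)\bdet{\sigma}\partial_i$ to verify \eqref{poincare-cond}, after which Theorem \ref{uppertriangular} does all the work. Your additional checks --- that $\bsi$ and $\hsi$ are again diagonal so that $\partial_i^\sigma$ really is $\sigma_i^{-1}\partial_i\sigma_i$, that the $i=j$ case of the hypothesis in (2) is genuinely needed for $\sigma_i^{-1}\partial_i\sigma_i=\partial_i$, and that $\partial_i^\sigma(1)=0$ gives flatness --- are all correct and are left implicit in the paper.

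One caveat on part (1). You correctly extract from the case $k=i$ of \eqref{qcomm} the relation $\partial_j\sigma_i=q_{ij}\sigma_i\partial_j$ for $i<j$, but you then assert that the family ``$\partial_i\sigma_j=q_{ji}\sigma_j\partial_i$ for all $i<j$'' already contains it. It does not: that extra relation couples $\partial_j$ with $\sigma_i$ rather than $\partial_i$ with $\sigma_j$, and is not a formal consequence of the $i<j$ family. The honest reduction of \eqref{qcomm} in the diagonal case is $\partial_i\sigma_j=q_{ji}\sigma_j\partial_i$ for \emph{all} $i\neq j$, together with $\partial_i\partial_j=q_{ji}\partial_j\partial_i$ for $i<j$; if the ``$\forall i<j$'' in \eqref{qcomm_diag} is read literally for both conditions, the stated condition is necessary but not visibly sufficient for the extension, so the ``if'' direction of the equivalence would be incomplete. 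The paper's own one\-/line proof of (1) shares this imprecision, so this is as much a remark on the statement as on your argument, and it has no bearing on part (2), whose hypothesis is explicitly ``for all $i,j$''.
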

\begin{proof}
(1) Since $\sigma_{ki}=0$ for all $k\neq i$, equation (\ref{qcomm}) reduces to
equation (\ref{qcomm_diag}).

(2) Note that $\partial_i^\sigma = \sigma_i^{-1}\partial_i\sigma_i =
\partial_i$.
On the other hand by hypothesis
$\partial_i \bdet{\sigma} = \left(\prod_j q_{ji}\right)
\bdet{\sigma}\partial_i$.
Hence
$$\left(\prod_j q_{ij} \right) \bdet{\sigma}^{-1}\partial_i \bdet{\sigma} =
\partial_i = \partial_i^\sigma.$$
Thus by Theorem \ref{uppertriangular}, $A$ satisfies the strong
Poincaré duality with respect to $(\Omega, d)$ in the sense of T.Brzezinski.
\end{proof}

\section{Multivariate quantum polynomials}
Let $K$ be a field, $n>1$, and $Q=(q_{ij})$ a $n\times n$ multiplicatively
antisymmetric matrix over $K$.
The multivariate quantum polynomial algebra with respect to $Q$ is defined as:
$$ A=\mathcal{O}_Q(K^n) := K\langle x_1, \ldots, x_n \rangle/\langle
x_ix_j-q_{ij}x_jx_i \: | \: 1\leq i,j\leq n\rangle.$$ This means
that $x_i$ and $x_j$ commute up to the scalar $q_{ij}$ in $A$.
Moreover every element is a linear combination of ordered
monomials $x^\alpha = x_1^{\alpha_1}\cdots x_n^{\alpha_n}$ with
$\alpha=(\alpha_1, \ldots, \alpha_n) \in \mathbb{N}^n$.  The set
of $n$-tuples $\NN^n$ is a submonoid of $\ZZ^n$ by componentwise
addition. For any $\alpha\in \ZZ$ we set $x^\alpha = 0$ if there
exists $i=1,\ldots, n$ such that $\alpha_i<0$. Furthermore $\NN^n$
is partially ordered as follows:  $\alpha\leq \beta$ if and only
if $\alpha_i\leq \beta_i$,$i=1,\cdots ,n$ for $\alpha,\beta \in
\NN^n$. If $\alpha\leq \beta$, then $\beta-\alpha \in \NN^n$ and
$x^{\beta-\alpha}\neq 0$.

For two generic monomials $x^\alpha$ and $x^\beta$ with $\alpha, \beta \in
\NN^n$ one has
\begin{equation} x^\alpha x^\beta = \left( \prod_{1\leq j<i\leq n}
q_{ij}^{\alpha_i \beta_j}\right) x^{\alpha+\beta} =
\mu(\alpha,\beta) x^{\alpha+\beta},\end{equation} where
$\mu(\alpha,\beta)=\prod_{1\leq j<i\leq n} q_{ij}^{\alpha_i
\beta_j}$. The algebra $A$ has been well-studied by Artamonov
\cite{Artamonov, ArtamonovWisbauer} as well by Goodearl and Brown
\cite{BrownGoodearl} and others. The Manin's quantum n-space is
obtained in case there exists $q\in K$ with $q_{ij}=q$ for all
$i<j$. In particular for $n=2$ one obtains the quantum plane.

We define automorphisms $\sigma_1, \ldots, \sigma_n$ and right
$\sigma_i$-derivations of $A$ as follows: For a generic monomial
$x^\alpha$ with $\alpha \in \mathbb{N}^n$ one sets
\begin{equation}
\sigma_i(x^\alpha) := \lambda_i(\alpha) x^\alpha
\qquad \mathrm{and} \qquad \partial_i(x^\alpha) := \alpha_i \delta_i(\alpha)
x^{\alpha-\epsilon^i}\end{equation}
where $\lambda_i(\alpha) = \prod_{j=1}^n q_{ij}^{\alpha_j}$,  $\delta_i(\alpha)
= \prod_{i<j} q_{ij}^{\alpha_j}$ and $\epsilon^i \in \mathbb{N}^n$ such that
$\epsilon^i_j=\delta_{ij}$.
Let $\overline{\delta_i}(\alpha)=\prod_{i>j} q_{ij}^{\alpha_j}$ and note that
$\lambda_i(\alpha)=\delta_i(\alpha)\overline{\delta_i}(\alpha)$.
Since $\mu(\alpha,\beta)=\mu(\alpha-\epsilon^i,\beta)\overline{\delta_i}(\beta)$
if $\alpha_i\neq 0$ and
$\mu(\alpha,\beta)=\mu(\alpha,\beta-\epsilon^i)\delta_i(\alpha)^{-1}$ if
$\beta_i\neq 0$, we have:
\begin{eqnarray*}
\partial_i(x^\alpha x^\beta)
&=&
(\alpha_i+\beta_i)\mu(\alpha,\beta)\delta_i(\alpha+\beta)x^{
\alpha+\beta-\epsilon^i}\\
&=&
\alpha_i\mu(\alpha-\epsilon^i,\beta)\overline{\delta_i}
(\beta)\delta_i(\alpha)\delta_i(\beta)x^{\alpha-\epsilon^i+\beta}
+
\beta_i\mu(\alpha,\beta-\epsilon^i)\delta_i(\alpha)^{-1}
\delta_i(\alpha)\delta_i(\beta)x^{\alpha+\beta-\epsilon^i}\\
&=& \alpha_i\delta_i(\alpha)x^{\alpha-\epsilon^i} \lambda_i(\beta)x^{\beta} +
x^\alpha \beta_i \delta_i(\beta)x^{\beta-\epsilon^i}\\
&=& \partial_i(x^\alpha)\sigma_i(x^\beta) + x^\alpha \partial_i(x^\beta)
\end{eqnarray*}

Let $i<j$ and $\alpha \in \NN^n$. Then
$\delta_j(\alpha-\epsilon^i)=\delta_j(\alpha)$, while
$\delta_i(\alpha-\epsilon^j)=\delta_i(\alpha)q_{ji}$. Hence
\begin{equation}
\partial_j(\partial_i(x^\alpha)) = \alpha_i \alpha_j
\delta_i(\alpha)\delta_j(\alpha-\epsilon^i) x^{\alpha-\epsilon^i-\epsilon^j}
= \alpha_i \alpha_j  q_{ij}\delta_i(\alpha-\epsilon^j)\delta_j(\alpha)
x^{\alpha-\epsilon^i-\epsilon^j}
= q_{ij}\partial_i(\partial_j(x^\alpha)) \end{equation}
Thus $\partial_j\partial_i = q_{ij}\partial_i\partial_j$ for all $i<j$.

Let $i\leq j$ and $\alpha \in \NN^n$. Then
\begin{equation}
\sigma_i(\partial_j(x^\alpha))=\alpha_j\delta_j(\alpha)\lambda_i(\alpha-\epsilon^j)x^{
\alpha-\epsilon^j}=
\alpha_j\delta_j(\alpha)\lambda_i(\alpha)q_{ji}x^{\alpha-\epsilon^j}=q_{ji}
\lambda_i(\alpha)\partial_j(x^{\alpha})=q_{ji}
\partial_j(\sigma_i(x^\alpha)).
\end{equation}
Hence $\sigma_i\partial_j = q_{ji}\partial_j\sigma_i$ for all $i\leq j$.
By Corollary \ref{extend_diff_structure_cor} we can conclude:

\begin{corollary}
Let $A=\mathcal{O}_Q(K^n)$ be the multivariate quantum polynomial algebra and
let $\Omega=\bigwedge^Q(\Omega^1)$ be the associated quantum exterior algebra.
Then the derivation $d:A\rightarrow \Omega^1$ with $d(x^\alpha)=\sum_{i=1}^n
\partial_i(x^\alpha) \omega_i$ makes $\Omega$ into a differential calculus such
that the de Rham complex and the integral complex are isomorphic.
\end{corollary}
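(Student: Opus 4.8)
The plan is to realise this as the diagonal case of Corollary~\ref{extend_diff_structure_cor}. The maps $\sigma_i(x^\alpha)=\lambda_i(\alpha)x^\alpha$ and $\partial_i(x^\alpha)=\alpha_i\delta_i(\alpha)x^{\alpha-\epsilon^i}$ introduced above, extended $K$-linearly along the basis of ordered monomials $\{x^\alpha\mid\alpha\in\NN^n\}$ (which is unambiguous since $A$ is free on this basis, so no compatibility with the defining relations needs to be checked separately), are the intended components of a diagonal right twisted multi-derivation. Everything then reduces to verifying the hypotheses of that corollary: (a) each $\sigma_i$ is an automorphism of $A$; (b) each $\partial_i$ is a right $\sigma_i$-derivation; (c) $\partial_i\partial_j=q_{ji}\partial_j\partial_i$ for all $i<j$; (d) $\partial_i\sigma_j=q_{ji}\sigma_j\partial_i$ for all $i,j$.

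Part (a) follows because $\lambda_i(\alpha)=\prod_k q_{ik}^{\alpha_k}$ is exponential in $\alpha$, so $\lambda_i(\alpha+\beta)=\lambda_i(\alpha)\lambda_i(\beta)$; hence $\sigma_i$ is multiplicative on products of monomials and is invertible with inverse $x^\alpha\mapsto\lambda_i(\alpha)^{-1}x^\alpha$, so in particular $(\partial,\sigma)$ is free. Parts (b) and (c) are precisely the two monomial identities already computed above: (b) from the factorisations $\mu(\alpha,\beta)=\mu(\alpha-\epsilon^i,\beta)\overline{\delta_i}(\beta)$ and $\mu(\alpha,\beta)=\mu(\alpha,\beta-\epsilon^i)\delta_i(\alpha)^{-1}$ together with $\lambda_i=\delta_i\overline{\delta_i}$, and (c) from $\delta_j(\alpha-\epsilon^i)=\delta_j(\alpha)$ and $\delta_i(\alpha-\epsilon^j)=q_{ji}\delta_i(\alpha)$ for $i<j$; I would simply cite those computations rather than redo them.

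The one point that warrants care is (d): Corollary~\ref{extend_diff_structure_cor}(2) uses the relation $\partial_i\sigma_j=q_{ji}\sigma_j\partial_i$ for \emph{all} pairs $i,j$, not only for $i<j$ as in the extension criterion, because it is what produces $\partial_i\bdet{\sigma}=\bigl(\prod_j q_{ji}\bigr)\bdet{\sigma}\partial_i$ with $\bdet{\sigma}=\sigma_1\circ\cdots\circ\sigma_n$. But the underlying scalar identity $\lambda_j(\alpha-\epsilon^i)=q_{ji}^{-1}\lambda_j(\alpha)$ holds for every pair (with $q_{ii}=1$ covering $i=j$), so evaluating $\partial_i\sigma_j$ and $\sigma_j\partial_i$ on $x^\alpha$ and comparing coefficients gives (d) for all $i,j$ at once — the same monomial computation the text performs for the companion relation $\sigma_i\partial_j=q_{ji}\partial_j\sigma_i$, run without the restriction $i\le j$. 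Once (a)--(d) are in hand, Corollary~\ref{extend_diff_structure_cor}(1) shows that $d$ extends to an $n$-dimensional differential calculus on $\Omega=\bigwedge^Q(\Omega^1)$ with $d(\omega_i)=0$, and part~(2) yields the isomorphism of the de Rham and integral complexes (realised by the maps $\Theta_m$ of Theorem~\ref{uppertriangular}). I do not expect any genuine obstacle here beyond disciplined bookkeeping with the scalar factors $\lambda_i,\delta_i,\overline{\delta_i},\mu$; the only place to stay alert is the all-pairs requirement in (d).
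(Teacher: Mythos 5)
Your proposal is correct and takes essentially the same route as the paper: Section 5 defines exactly these diagonal $\sigma_i$ and $\partial_i$ on monomials, performs the same verifications you list (the twisted Leibniz rule via the factorisations of $\mu(\alpha,\beta)$, the relation $\partial_i\partial_j=q_{ji}\partial_j\partial_i$, and $\sigma_i\partial_j=q_{ji}\partial_j\sigma_i$), and then invokes Corollary~\ref{extend_diff_structure_cor}. Your extra care about needing $\partial_i\sigma_j=q_{ji}\sigma_j\partial_i$ for \emph{all} pairs $i,j$ is well placed --- the paper states its computation only for $i\le j$, but, as you note, the scalar identity $\lambda_j(\alpha-\epsilon^i)=q_{ji}^{-1}\lambda_j(\alpha)$ behind it holds for every pair, so the argument goes through.
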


\section{Manin's quantum $n$-space}
In this section we will show that for a special case of the multivariate quantum
polynomial algebra there exists a differential calculus whose bimodule structure
is not diagonal, but upper triangular and nevertheless the de Rham complex and
the integral complex are isomorphic.

Let $q\in K\setminus\{0\}$.
For the matrix $Q=(q_{ij})$ with $q_{ij}=q$ and $q_{ji}=q^{-1}$ for all $i<j$
and
$q_{ii}=1$, the algebra $\mathcal{O}_Q(K^n)$ is called the
{\it coordinate ring of quantum $n$-space} or {\it Manin's quantum $n$-space}
and will be denoted by
$A=K_q[x_1,\ldots, x_n]$.
We have the following defining relations of the algebra $A$
\begin{equation} x_{i}x_{j}=qx_{j}x_{i}, \qquad i<j.\end{equation}

Note that for  $\alpha \in \NN^n$ and $1\leq i \leq n$ we have:
$$\lambda_i(\alpha) x^\alpha x_i \: = \: x^{\alpha+\epsilon^i} \: = \: \ol_i(\alpha)
x_ix^\alpha,$$
where
$$\lambda_i(\alpha)=\prod_{i<j} q^{\alpha_j} \qquad \mbox{ and }\qquad
\ol_i(\alpha)=\prod_{j<i} q^{-\alpha_j}.$$
More generally
$$ x^{\alpha + \beta} = \left(\prod_{j=1}^{n-1} \lambda_j(\alpha)^{\beta_j}\right)
x^\alpha x^\beta = \prod_{1\leq s<j\leq n}q^{\alpha_s\beta_j}
x^\alpha x^\beta$$ Let $\mu(\alpha,\beta)$ be the scalar such that
$x^\alpha x^\beta=\mu(\alpha,\beta)x^{\alpha+\beta}$.

We take the following two-parameter first order differential
calculus $\Omega^{1}$ (see \cite[p.468]{KlimykSchmudgen} for the case $p=q^2$ and \cite[Example 3.9]{BrzezinskiKaoutitLomp} for the case $n=2$), which is freely
generated by $\{\omega_1, \ldots \omega_n\}$ over $A$ subject to
the relations
\begin{equation}\label{FODC-rel1} \omega_ix_{j}=qx_{j}\omega_i +
(p-1)x_{i}\omega_{j}, \qquad i<j,\end{equation}
\begin{equation}\label{FODC-rel2}
\omega_{i}x_{i}=px_{i}\omega_{i},\end{equation}
\begin{equation} \omega_{j}x_{i}=pq^{-1}x_{i}\omega_{j}, \qquad
i<j,\end{equation} There exists an algebra map $\sigma:
A\rightarrow M_n(A)$ whose associated matrix of endomorphisms
$\sigma=(\sigma_{ij})$ is upper triangular and such that $\omega_i
x^\alpha = \sum_{i\leq j} \sigma_{ij}(x^\alpha) \omega_j.$ The
next lemma will characterize the algebra map $\sigma$. For any
$\alpha\in\NN^n$ and $i=1,\ldots,n $ set $\pi_i(\alpha) =
\prod_{s<i} p^{\alpha_s}$.

\begin{lemma}\label{sigma-Manin-space}
For $\alpha\in\NN^n$ the entries of the matrix
$\sigma(x^{\alpha})$ are as follows $\sigma_{ij}(x^\alpha)=0$ for
$i>j$ and
\begin{equation*}\label{sigma_relations}
\sigma_{ij}(x^\alpha) = \eta_{ij}(\alpha)x^{\alpha+\epsilon^{i}-\epsilon^{j}}
\qquad \mbox { where }\qquad \eta_{ij}(\alpha) = \left\{\begin{array}{lcl}
\pi_j(\alpha)\ol_i(\alpha)\lambda_j(\alpha)(p^{\alpha_j}-1) & \mbox{for} &
i<j,\\[3mm]
\pi_i(\alpha)\ol_i(\alpha)\lambda_i(\alpha) p^{\alpha_i} & \mbox{for} & i=j
\end{array}\right.
%\mbox{ for $i<j$ and } \qquad \sigma_{ii}(x^\alpha) = \eta_{ii}(\alpha)x^\alpha
\end{equation*}
\end{lemma}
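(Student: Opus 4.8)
The plan is to compute $\sigma(x^\alpha)$ by induction on the total degree $|\alpha| = \alpha_1 + \cdots + \alpha_n$, using the multiplicativity of $\sigma$ together with the three families of commutation relations \eqref{FODC-rel1}, \eqref{FODC-rel2} and the third displayed relation. The base case $|\alpha| = 0$ is trivial since $\sigma_{ij}(1) = \delta_{ij}$, which is consistent with the formula as $\eta_{ii}(0) = 1$ and $\eta_{ij}(0) = 0$ for $i < j$ (the factor $p^{\alpha_j} - 1$ vanishes). For the inductive step it suffices, since $\sigma$ is an algebra map, to verify the formula on a product $x^\alpha x_\ell$ assuming it for $x^\alpha$; that is, one needs $\sigma_{ij}(x^\alpha x_\ell) = \sum_{k} \sigma_{ik}(x^\alpha)\sigma_{kj}(x_\ell)$, and $\sigma(x_\ell)$ is read off directly from the defining relations \eqref{FODC-rel1}--\eqref{FODC-rel2}: $\sigma_{\ell\ell}(x_\ell) = p x_\ell$, $\sigma_{j\ell}(x_j) = (p-1)x_j$ for $j < \ell$ wait — more carefully, $\omega_i x_\ell$ expands with $\sigma_{i\ell}(x_\ell) = q x_\ell$ and $\sigma_{ij}$-type corrections; one must transcribe each of the three cases ($i < \ell$, $i = \ell$, $i > \ell$) into the matrix entries of $\sigma(x_\ell)$.

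Concretely, from the relations one has $\sigma_{ii}(x_i) = p x_i$, $\sigma_{ij}(x_j) = (p-1) x_i$ for $i < j$, $\sigma_{ii}(x_\ell) = q x_\ell$ for $i < \ell$, $\sigma_{jj}(x_\ell) = pq^{-1} x_\ell$ for $\ell < j$, and all other entries of $\sigma(x_\ell)$ zero. So $\sigma(x_\ell)$ is upper triangular with at most two nonzero entries in each row, and the matrix product $\sigma(x^\alpha)\sigma(x_\ell)$ has a controlled, sparse form. The heart of the computation is then the scalar bookkeeping: one substitutes $\eta_{ij}(\alpha) x^{\alpha + \epsilon^i - \epsilon^j}$ for $\sigma_{ij}(x^\alpha)$, uses $x^{\alpha + \epsilon^i - \epsilon^j} x_\ell = \mu(\cdot,\cdot)^{\pm 1} x^{\alpha + \epsilon^i - \epsilon^j + \epsilon^\ell}$ to reorder, and checks that the resulting coefficient equals $\eta_{ij}(\alpha + \epsilon^\ell)$. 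This reduces to elementary identities among the auxiliary functions: $\pi_j(\alpha + \epsilon^\ell) = \pi_j(\alpha) \cdot p^{[\ell < j]}$, $\ol_i(\alpha + \epsilon^\ell) = \ol_i(\alpha) \cdot q^{-[\ell < i]}$, $\lambda_j(\alpha + \epsilon^\ell) = \lambda_j(\alpha) \cdot q^{[j < \ell]}$, together with $p^{(\alpha+\epsilon^\ell)_j} - 1 = p^{\alpha_j} - 1$ when $\ell \neq j$ and $= p(p^{\alpha_j}-1) + (p-1)$ when $\ell = j$ — the last identity being precisely what forces the two-term split $\sigma_{ij}(x^\alpha)\sigma_{jj}(x_\ell) + \sigma_{ij'}(x^\alpha)\sigma_{j'j}(x_\ell)$ to recombine into a single $\eta_{ij}$.

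It is cleanest to organize the verification by the position of $\ell$ relative to the pair $(i,j)$: the cases $\ell < i$, $\ell = i$, $i < \ell < j$, $\ell = j$, and $\ell > j$ behave differently because different auxiliary factors pick up the extra $p$, $q$ or $q^{-1}$, and because the matrix product $\sum_k \sigma_{ik}(x^\alpha)\sigma_{k\ell}(x_\ell)$ has either one or two surviving terms depending on whether $\ell$ falls strictly between $i$ and $j$. I would also separately treat the diagonal case $i = j$, where $\sigma_{ii}(x^\alpha x_\ell) = \sigma_{ii}(x^\alpha)\sigma_{ii}(x_\ell)$ is a single product and the identity $\eta_{ii}(\alpha + \epsilon^\ell) = \eta_{ii}(\alpha) \cdot (\text{scalar from } \sigma_{ii}(x_\ell))$ follows immediately from the multiplicativity of each auxiliary function in $\alpha$.

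The main obstacle is purely the combinatorial case analysis: there are no conceptual difficulties, but one must keep careful track of five or so overlapping index regimes and, in the off-diagonal case $i < j$, correctly identify which pair of matrix entries of $\sigma(x^\alpha)$ contributes to a given entry of the product (namely $\sigma_{ij}$ paired with $\sigma_{jj}(x_\ell)$, and — when $j = \ell$ — also $\sigma_{i\ell'}$ for the appropriate $\ell'$, exploiting $\sigma_{i\ell}(x_\ell)$-type off-diagonal entries of $\sigma(x_\ell)$). The recombination identity $p^{\alpha_\ell + 1} - 1 = p(p^{\alpha_\ell} - 1) + (p - 1)$ is the single place where something slightly nontrivial happens, and I would present that step explicitly while leaving the remaining coefficient comparisons as routine. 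Once all cases check, the induction closes and the lemma follows.
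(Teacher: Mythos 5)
Your overall strategy --- induction on $|\alpha|$, peeling off one generator $x_\ell$ and using the multiplicativity $\sigma_{ij}(x^\alpha x_\ell)=\sum_k\sigma_{ik}(x^\alpha)\sigma_{kj}(x_\ell)$ together with the entries of $\sigma(x_\ell)$ read off from (\ref{FODC-rel1})--(\ref{FODC-rel2}) --- is the same as the paper's, which computes $\omega_i x^\beta=\omega_i x^\alpha x_k$ and expands term by term. The one organizational difference is that the paper always peels off the \emph{largest} index $k$ with $\beta_k\neq 0$: then $\alpha_j=0$ for $j>k$ forces $\sigma_{ij}(x^\alpha)=0$ for $j>k$ (the factor $p^{\alpha_j}-1$ vanishes), so the regime ``$\ell$ to the left of surviving entries'' never occurs and the case analysis collapses to $k<i$, $k=i$, $k>i$. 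You allow an arbitrary $\ell$ with $\beta_\ell>0$, which is legitimate but strictly more work.

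There is, however, a genuine gap in your description of the decisive case $j=\ell>i$. You assert that the matrix product ``has either one or two surviving terms'' and that the recombination involves ``a pair of matrix entries.'' This is not so: column $\ell$ of $\sigma(x_\ell)$ has $\sigma_{k\ell}(x_\ell)=(p-1)x_k$ for \emph{every} $k<\ell$, in addition to $\sigma_{\ell\ell}(x_\ell)=px_\ell$, so $\sigma_{i\ell}(x^\alpha x_\ell)=\sum_{i\le k<\ell}\sigma_{ik}(x^\alpha)\,(p-1)x_k+\sigma_{i\ell}(x^\alpha)\,px_\ell$ has up to $\ell-i+1$ nonzero summands. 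Collapsing this sum to the single coefficient $\eta_{i\ell}(\alpha+\epsilon^\ell)$ requires, besides the identity $p^{\alpha_\ell+1}-1=p(p^{\alpha_\ell}-1)+(p-1)$ that you do flag, the telescoping relation $\pi_k(\alpha)p^{\alpha_k}=\pi_{k+1}(\alpha)$, whence $\sum_{i<k<\ell}\pi_k(\alpha)(p^{\alpha_k}-1)=\pi_{\ell}(\alpha)-\pi_{i+1}(\alpha)$; this is exactly the step marked $(*)$ in the paper's proof, and it is where most of the content of the lemma sits. As written, your plan would stall here, because the sparsity you expect is not what the computation produces. The remaining regimes ($\ell<i$, $\ell=i$, $i<\ell<j$, $\ell>j$, and the diagonal $i=j$, where upper-triangularity does reduce the product to a single term) are routine single-term checks and are described correctly in your sketch.
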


\begin{proof}
Fix a number $i$ between $1$ and $n$.
We prove the relations for $\sigma_{ij}$ by induction on the length of $\alpha$,
which by length we mean $ |\alpha| = \alpha_1 + \cdots  + \alpha_n$.
For $|\alpha|=0$ the relation is clear, because $\alpha_j=0$ for all $j$, i.e.
$x^\alpha = 1$. Hence $\omega_i x^\alpha = \omega_i$, i.e.
$\sigma_{ij}(x^{\alpha})=\delta_{ij}$. Since $p^{\alpha_j}-1=0$ for all $j$ and
$p^{\alpha_i}=1$ the relation holds.

Now suppose that $m\geq 0$ and that the relations (\ref{sigma_relations}) hold
for all $\alpha \in \NN^n$ of length $m$. Let $\beta\in \NN^n$ be an element of
length $m+1$ and let $k$ be the largest index $j$ such that $\beta_j\neq 0$. Set
$\alpha=\beta - \epsilon^k$, i.e. $\beta=\alpha + \epsilon^k$.
We have to discuss the three cases $k<i$, $k=i$ and $k>i$.

If $k<i$, then for all $i<j$, $\alpha_j=0$, i.e. $\sigma_{ij}(x^\alpha)=0$.
Hence
\begin{equation*}
\omega_i x^\beta = \omega_i x^\alpha x_k = \sigma_{ii}(x^\alpha)\omega_ix_k =
pq^{-1}\sigma_{ii}(x^\alpha) x_k \omega_i
= p\pi_i(\alpha) q^{-1}\ol_i(\alpha)x^{\alpha}x_k \omega_i=
\pi_i(\beta)\ol_i(\beta)x^{\beta}\omega_i,
\end{equation*}
since $\lambda_i(\alpha)=p^{\alpha_i}=1$, $\pi_i(\alpha +
\epsilon^k)=p\pi_i(\alpha)$ and
$\ol_i(\alpha + \epsilon^k)=q^{-1}\ol_i(\alpha)$ for any $k<i$ and
$\alpha\in\NN^n$.
Thus
$\sigma_{ii}(x^\beta)=\pi_i(\beta)\ol_i(\beta)\lambda_i(\beta)p^{\beta_i}
x^\beta$.

If $k=i$, then again $\sigma_{ij}(x^\alpha)=0$ for all $j>i$. Moreover
$\lambda_j(\alpha)=1$ for all $j>i$. Thus
\begin{equation*}
\omega_i x^\beta = \sigma_{ii}(x^\alpha)\omega_ix_i = \sigma_{ii}(x^\alpha) p
x_i \omega_i
= \pi_i(\alpha) \ol_i(\alpha) p^{\alpha_i+1}x^{\alpha}x_i\omega_i
= \pi_i(\beta) \ol_i(\beta) p^{\beta_i}x^{\beta}\omega_i,
\end{equation*}
since $\alpha_s=\beta_s$ for all $s<i$, i.e. $\pi_i(\beta)=\pi_i(\alpha)$ and
$\ol_i(\beta)=\ol_i(\alpha)$.

If $i<k$, then note that $\sigma_{ij}(x^\alpha)=0$ for all $k<j$, because
$p^{\alpha_j}=1$. Thus
\begin{eqnarray*}
\omega_i x^\beta &=& \sigma_{ii}(x^\alpha)\omega_i x_k +
\sum_{i<j<k} \sigma_{ij}(x^\alpha) \omega_j x_ k + \sigma_{ik}(x^\alpha)
\omega_k x_k \\
&=& \sigma_{ii}(x^\alpha)[qx_k\omega_i + (p-1)x_i\omega_k]+
\sum_{i<j<k} \sigma_{ij}(x^\alpha) [qx_k\omega_j + (p-1)x_j\omega_k] +
\sigma_{ik}(x^\alpha) p x_k\omega_k \\
&=& q\sigma_{ii}(x^\alpha)x_k \omega_i
+ \sum_{i<j<k} q\sigma_{ij}(x^\alpha)x_k\omega_j
+ \underbrace{\left[ (p-1)\sigma_{ii}(x^\alpha)x_i +
\sum_{i<j<k} (p-1)\sigma_{ij}(x^\alpha)x_j +
p\sigma_{ik}(x^\alpha)x_k\right]}_{(*)}\omega_k
\end{eqnarray*}
Note that for any $j<k$ we have $q\lambda_j(\alpha)=\lambda_j(\beta)$. Hence
$q\sigma_{ij}(x^\alpha)x_k = \sigma_{ij}(x^\beta)$ for all $j<k$.
It is left to show that the expression $(*)$ equals $\sigma_{ik}(x^\beta)$.
Recall that $\lambda_l(\alpha)x^\alpha x_l = x^{\alpha+\epsilon^l}$. Hence
$\lambda_j(\alpha)x^{\alpha+\epsilon^{i}-\epsilon^{j}}x_j =
x^{\alpha+\epsilon^i}$.
Note also that $p^{\alpha_j}\pi_j(\alpha)=\pi_{j+1}(\alpha)$.
\begin{eqnarray*}
(*)&=&
(p-1)\ol_i(\alpha)\left[ \pi_i(\alpha)\lambda_i(\alpha) p^{\alpha_i} x^\alpha
x_i +
\sum_{i<j<k}
\pi_j(\alpha)\lambda_j(\alpha)(p^{\alpha_j}-1)x^{\alpha+\epsilon^{i}-\epsilon^{j
}}x_j\right] + p\sigma_{ik}(x^\alpha)x_k \\
&=&
(p-1)\ol_i(\alpha)\left[ p^{\alpha_i} \pi_i(\alpha)  +
\sum_{i<j<k} \pi_j(\alpha)(p^{\alpha_j}-1)\right] x^{\alpha+\epsilon^i} +
p\sigma_{ik}(x^\alpha)x_k \\
&=&
(p-1)\ol_i(\alpha)\left[ \pi_{i+1}(\alpha)  +
\sum_{i<j<k} ( \pi_{j+1}(\alpha) - \pi_{j}(\alpha) ) \right]
x^{\alpha+\epsilon^i} +
p\pi_k(\alpha)\ol_i(\alpha)(p^{\alpha_k}-1)x^{\alpha+\epsilon^i} \\
&=&
(p-1)\ol_i(\alpha)\left[ \pi_{i+1}(\alpha)  +
\pi_{k}(\alpha) - \pi_{i+1}(\alpha) \right] x^{\alpha+\epsilon^i} +
p\pi_k(\alpha)\ol_i(\alpha)(p^{\alpha_k}-1)x^{\alpha+\epsilon^i} \\
&=&
\ol_i(\alpha)\left[ (p-1)\pi_{k}(\alpha)  +
p\pi_k(\alpha)(p^{\alpha_k}-1)\right]x^{\alpha+\epsilon^i} \\
&=& \ol_i(\alpha)(p^{\alpha_k+1}-1)\pi_{k}(\alpha)x^{\alpha+\epsilon^i}\\
&=&
\pi_{k}(\beta)\ol_i(\beta)\lambda_k(\beta)(p^{\beta_k}-1)x^{
\beta+\epsilon^i-\epsilon^k} = \sigma_{ik}(x^\beta),
\end{eqnarray*}
since $\lambda_k(\beta)=1=\lambda_k(\alpha)$ and
$\pi_k(\alpha)=\pi_k(\beta)$ as $\alpha$ and $\beta$ differ only
in the $k$th position.
\end{proof}

We will define a derivation $d:K_q[x_1, \ldots, x_n] \rightarrow
\Omega^1$ such that $d(x_i)=\omega_i$ for all $i$. For any
$\alpha\in\NN^n$ we set $d(x^\alpha)=\sum_{i=1}^n
\partial_i(x^\alpha)\omega_i$ where
\begin{equation}\partial_{i}(x^{\alpha})=\delta_i(\alpha)x^{\alpha-\epsilon^i}
\qquad \mathrm{and } \qquad
\delta_i(\alpha)=\pi_i(\alpha)\lambda_i(\alpha)\frac{p^{\alpha_{i}}-1}{p-1}.
\end{equation}
for all $i=1,\ldots, n$. Note that for $i,k$ we
have:
$$\delta_i(\alpha) = q^{\mp 1}\delta_i(\alpha \pm \epsilon^k),\:\:
\mbox{ if } i<k \qquad \mbox{and} \qquad \delta_i(\alpha) = p^{\mp
1}\delta_i(\alpha \pm \epsilon^k),\:\: \mbox{ if } i>k.$$

\begin{lemma}\label{manin-quantum-space-relations} The pair $(\partial, \sigma)$
is a right twisted multi-derivation of $K_q[x_1, \ldots, x_n]$
satisfying the equations $(\ref{qcomm})$ with respect to the multiplicatively
antisymmetric matrix $Q'$ whose
entries are $Q'_{ij}=p^{-1}q$ for $i<j$. In particular
\begin{equation}\label{qcomm_manin}
\partial_i\partial_j = pq^{-1} \partial_j\partial_i, \qquad \forall i<j
\end{equation}
holds as well as for all $i,k,j$:
\begin{eqnarray*}
\partial_i\sigma_{kj} &=& pq^{-1} \sigma_{kj}\partial_i, \qquad i < k \leq j \\
\partial_i\sigma_{kj} &=& pq^{-1} \partial_j\sigma_{ki}, \qquad k < i < j \\
\sigma_{ki}\partial_j &=& pq^{-1} \sigma_{kj}\partial_i, \qquad k < i < j \\
\partial_i\sigma_{ij}-pq^{-1}\partial_j\sigma_{ii} &=&
pq^{-1}\sigma_{ij}\partial_i-\sigma_{ii}\partial_j, \qquad i<j
\end{eqnarray*}
\end{lemma}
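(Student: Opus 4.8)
The plan is to reduce every assertion to a numerical identity on the $K$-basis of ordered monomials $\{x^{\alpha}\mid\alpha\in\NN^{n}\}$ of $A=K_q[x_1,\dots,x_n]$. Both families of structure maps send a monomial to a scalar times a monomial: $\partial_i(x^{\alpha})=\delta_i(\alpha)x^{\alpha-\epsilon^{i}}$ by definition, and $\sigma_{ij}(x^{\alpha})=\eta_{ij}(\alpha)x^{\alpha+\epsilon^{i}-\epsilon^{j}}$ with $\sigma_{ij}=0$ for $i>j$ by Lemma \ref{sigma-Manin-space}. Since the exponent shifts $-\epsilon^{i}$ and $\epsilon^{i}-\epsilon^{j}$ are additive, any composite of these operators applied to $x^{\alpha}$ is a scalar times one fixed monomial, independent of the order of composition; so each asserted operator identity becomes an equality of scalars, to be checked from the defining products $\pi_i(\alpha)=\prod_{s<i}p^{\alpha_s}$, $\lambda_i(\alpha)=\prod_{i<j}q^{\alpha_j}$, $\ol_i(\alpha)=\prod_{j<i}q^{-\alpha_j}$, the shift rules for $\delta_i$ stated after its definition, and the analogous ones for $\eta_{ij}$, read off of Lemma \ref{sigma-Manin-space} in the same manner (e.g.\ $\eta_{ij}(\alpha-\epsilon^{i})=p^{-1}\eta_{ij}(\alpha)$, $\eta_{ii}(\alpha-\epsilon^{j})=q^{-1}\eta_{ii}(\alpha)$ for $i<j$, and $\eta_{kj}(\alpha-\epsilon^{i})=p^{-1}q\,\eta_{kj}(\alpha)$ for $i<k\le j$). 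I write $[m]_p:=(p^{m}-1)/(p-1)$, so that $\delta_i(\alpha)=\pi_i(\alpha)\lambda_i(\alpha)[\alpha_i]_p$; note that $Q'$ is multiplicatively antisymmetric with $Q'_{ji}=pq^{-1}$ for $i<j$.

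To prove that $(\partial,\sigma)$ is a right twisted multi-derivation it suffices, since $\partial_i(1)=0$ (as $\delta_i(0)=0$) and $\sigma$ is an algebra map, to establish the Leibniz identity $\partial_i(x^{\alpha}x_k)=\sum_{j\le i}\partial_j(x^{\alpha})\sigma_{ji}(x_k)+x^{\alpha}\partial_i(x_k)$ for all $\alpha,i$ and every generator $x_k$; a routine induction on word length in the second argument then yields the full relation (\ref{eq.partial}). Evaluating Lemma \ref{sigma-Manin-space} at $\alpha=\epsilon^{k}$, one finds $\sigma_{ji}(x_k)=0$ except that $\sigma_{ii}(x_k)$ is $qx_k$, $px_k$ or $pq^{-1}x_k$ according as $i<k$, $i=k$ or $i>k$, and $\sigma_{jk}(x_k)=(p-1)x_j$ for $j<k$; also $\partial_i(x_k)=\delta_{ik}$. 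Thus for $i\neq k$ the identity collapses to the single scalar equation $\partial_i\!\bigl(\mu(\alpha,\epsilon^{k})x^{\alpha+\epsilon^{k}}\bigr)=\delta_i(\alpha)\,\sigma_{ii}(x_k)$, immediate from the shift rule for $\delta_i$ and the value of $\mu(\alpha,\epsilon^{k})$, while for $i=k$ it becomes
\[
\partial_k(x^{\alpha}x_k)=p\,\partial_k(x^{\alpha})x_k+(p-1)\sum_{j<k}\partial_j(x^{\alpha})x_j+x^{\alpha},
\]
whose right-hand side must be collapsed by exactly the telescoping of the partial products $\pi_j(\alpha)$ (via $\pi_{j+1}(\alpha)-\pi_j(\alpha)$) used in the proof of Lemma \ref{sigma-Manin-space}. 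This telescoping is the main obstacle in the proof. (Equivalently one may argue conceptually: the derivation $d$ with $d(x_i)=\omega_i$ is well defined on $A$, by the two-line check $d(x_ix_j-qx_jx_i)=0$ in $\Omega^{1}$ using (\ref{FODC-rel1}), (\ref{FODC-rel2}) and the relation $\omega_j x_i = pq^{-1}x_i\omega_j$; hence $(\partial',\sigma)$ with $d(x^{\alpha})=\sum_i\partial'_i(x^{\alpha})\omega_i$ is a right twisted multi-derivation, and the same induction identifies $\partial'_i$ with the explicit $\partial_i$.)

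For the commutation relations (\ref{qcomm}) with $Q'$, I would treat the two equations of (\ref{qcomm}) separately. The first, $\partial_i\partial_j=pq^{-1}\partial_j\partial_i$ for $i<j$, which is the ``in particular'' assertion (\ref{qcomm_manin}), is immediate on $x^{\alpha}$: the shift rules give $\partial_i\partial_j(x^{\alpha})=q^{-1}\delta_i(\alpha)\delta_j(\alpha)\,x^{\alpha-\epsilon^{i}-\epsilon^{j}}$ and $\partial_j\partial_i(x^{\alpha})=p^{-1}\delta_i(\alpha)\delta_j(\alpha)\,x^{\alpha-\epsilon^{i}-\epsilon^{j}}$. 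For the second equation one checks, again on $x^{\alpha}$, the four displayed families, which between them exhaust it: the first family covers the range $i<k\le j$ (where $\sigma_{ki}=0$), the fourth covers $k=i$, the second and third added together cover $k<i$, and $k>j$ is trivial. In the first three families both sides applied to $x^{\alpha}$ produce the same monomial and, after using the shift rules, the same scalar, each coefficient reducing to one monomial in $p$ and $q$ times the common symmetric factor $\pi_i(\alpha)\pi_j(\alpha)\lambda_i(\alpha)\lambda_j(\alpha)[\alpha_i]_p[\alpha_j]_p$ (with an extra $\ol_k(\alpha)$ in the families involving $\sigma$) — pure bookkeeping. The fourth, mixed family $\partial_i\sigma_{ij}-pq^{-1}\partial_j\sigma_{ii}=pq^{-1}\sigma_{ij}\partial_i-\sigma_{ii}\partial_j$ ($i<j$) is the only genuinely non-formal instance: all four terms send $x^{\alpha}$ to a multiple of $x^{\alpha-\epsilon^{j}}$, and substituting the explicit $\eta_{ij},\eta_{ii},\delta_i,\delta_j$ together with the shift rules reduces each side to $-C\,x^{\alpha-\epsilon^{j}}$ with common factor $C=q^{-1}\pi_i(\alpha)\pi_j(\alpha)\ol_i(\alpha)\lambda_i(\alpha)\lambda_j(\alpha)[\alpha_j]_p$, the cancellation being the trivial $p$-integer identity $(p-1)[m+1]_p-p^{m+1}=-1=(p-1)[m]_p-p^{m}$. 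Combining the three parts establishes the lemma.
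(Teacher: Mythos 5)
Your proposal is correct and follows essentially the same route as the paper: every identity is reduced to a scalar identity on the monomial basis via the shift rules for $\delta_i$ and $\eta_{ij}$, your case split on $k$ relative to $i<j$ (first family for $i<k\le j$, fourth for $k=i$, second plus third for $k<i$, trivial for $k>j$) is exactly how the displayed families assemble into (\ref{qcomm}), and the two non-formal points you isolate --- the telescoping $\sum_{j<k}\pi_j(\alpha)(p^{\alpha_j}-1)=\pi_k(\alpha)-1$ in the Leibniz check and the cancellation $(p-1)\frac{p^{m+1}-1}{p-1}-p^{m+1}=-1=(p-1)\frac{p^{m}-1}{p-1}-p^{m}$ in the mixed family $k=i$ --- are precisely the ones the paper's computation turns on. The only organizational difference is that you verify the twisted-derivation identity against generators $x_k$ in the second slot and induct on word length (legitimately, since $\sigma$ is an algebra map), whereas the paper computes $\partial_l(x^{\alpha}x^{\beta})$ for two arbitrary monomials directly; both reduce to the same telescoping identity.
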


\begin{proof}
Let $\alpha, \beta \in \NN^n$. To prove that the
pair $(\partial, \sigma)$ is a right twisted multi-derivation, we
show the following $n$ equations hold
\begin{equation}\label{n-equations-multi-derivation}\partial_l(x^{\alpha}x^{
\beta})=\sum_{k}\partial_k(x^{\alpha})\sigma_{kl}(x^{\beta})+x^{\alpha}
\partial_l(x^{\beta}), \qquad l=1,...,n.\end{equation}
Since $x_ix_j=q^{-1}x_jx_i$ for $i>j$, we have
$x_i^{\alpha_i}x_j^{\beta_j}=q^{-\alpha_i\beta_j}x_j^{\beta_j}x_i^{\alpha_i}$
for $i>j$, and hence
$x^{\alpha}x^{\beta}=\mu(\alpha,\beta)x^{\alpha+\beta},$ where
$\mu(\alpha,\beta)=\prod_{1\leq r<s\leq n}q^{-\alpha_s\beta_r}.$
We then obtain
\begin{equation}\partial_l(x^{\alpha}x^{\beta})\nonumber
=\mu(\alpha,\beta)\delta_l(\alpha+\beta)x^{\alpha+\beta-\epsilon^{l}}
%\\\nonumber
=\pi_l(\alpha+\beta)\lambda_l(\alpha+\beta)\frac{p^{\alpha_{l}+\beta_l}-1}{p-1}
\mu(\alpha,\beta)x^{\alpha+\beta-\epsilon^{l}}.
\end{equation}
On the other hand, we compute
\begin{eqnarray}\label{derivation-part1}
\nonumber \sum_{k=1}^{n}\partial_k(x^{\alpha})\sigma_{kl}(x^{\beta})
&=&\sum_{k=1}^{l-1}\partial_k(x^{\alpha})\sigma_{kl}(x^{\beta})+\partial_l(x^{
\alpha})\sigma_{ll}(x^{\beta})\\
\nonumber &=&\sum_{k=1}^{l-1}\delta_k(\alpha)
\pi_l(\beta)\ol_k(\beta)\lambda_l(\beta)
(p^{\beta_l}-1)x^{\alpha-\epsilon^{k}}x^{\beta+\epsilon^{k}-\epsilon^{l}}
+p^{\beta_l}\delta_l(\alpha) \pi_l(\beta) \ol_l(\beta)
\lambda_l(\beta)
x^{\alpha-\epsilon^{l}}x^{\beta}\\
 &=&\left[\pi_l(\beta)\frac{p^{\beta_l}-1}{p-1}\sum_{k=1}^{l-1}
\pi_k(\alpha) (p^{\alpha_k}-1)+ p^{\beta_l}
\pi_l(\alpha+\beta)\frac{p^{\alpha_l}-1}{p-1}\right]
\lambda_l(\alpha+\beta)\mu(\alpha,\beta)x^{\alpha+\beta-\epsilon^{l}}\\
\nonumber  &=&\left[\pi_l(\beta)\frac{p^{\beta_l}-1}{p-1}(\pi_l(\alpha)-1)+
p^{\beta_l}\pi_l(\alpha+\beta)\frac{p^{\alpha_l}-1}{p-1}\right]
\lambda_l(\alpha+\beta)\mu(\alpha,\beta)x^{\alpha+\beta-\epsilon^{l}}\\
\nonumber &=&\left[\pi_l(\alpha+\beta)\frac{p^{\alpha_l+\beta_l}-1}{p-1}
-\pi_l(\beta)\frac{p^{\beta_l}-1}{p-1}\right]\lambda_l(\alpha+\beta)\mu(\alpha,
\beta)x^{\alpha+\beta-\epsilon^{l}},
\end{eqnarray}
where the third equality holds because
$$\lambda_k(\alpha)\ol_k(\beta)x^{\alpha-\epsilon^{k}}x^{\beta+\epsilon^{k}
-\epsilon^{l}}
=\lambda_l(\alpha)\mu(\alpha,\beta)x^{\alpha+\beta-\epsilon^{l}}\qquad
\mbox{and} \qquad x^{\alpha-\epsilon^{l}}x^{\beta}=
\lambda_l(\beta)\mu(\alpha,\beta)x^{\alpha+\beta-\epsilon^{l}}.$$
The fourth equation follows since $\pi_k(\alpha)p^{\alpha_k}=\pi_{k+1}(\alpha)$.
As we also have
\begin{equation}\label{derivation-part2}
x^{\alpha}\partial_l(x^{\beta})=\delta_l(\beta)x^{\alpha}x^{\beta-\epsilon^{l}
}=\pi_l(\beta)\lambda_l(\alpha+\beta)\frac{p^{\beta_l}-1}{p-1}\mu(\alpha,
\beta)x^{\alpha+\beta-\epsilon^{l}}.
\end{equation}
We can conclude, combining (\ref{derivation-part1}) and
(\ref{derivation-part2}) that (\ref{n-equations-multi-derivation})
holds:
\begin{equation}\sum_{k=1}^{n}\partial_k(x^{\alpha})\sigma_{kl}(x^{\beta})+x^{
\alpha}\partial_l(x^{\beta})=\pi_l(\alpha+\beta)\lambda_l(\alpha+\beta)
\frac{p^{\alpha_{l}+\beta_l}-1}{p-1}\mu(\alpha,\beta)x^{\alpha+\beta-\epsilon^{l
}} = \partial_l(x^\alpha x^\beta).
\end{equation}

For any $i<j$ we have:
\begin{equation}\partial_i\partial_j(x^{\alpha})=
\delta_i(\alpha-\epsilon^j)\delta_j(\alpha)x^{\alpha-\epsilon^{i}-\epsilon^{j}}
= q^{-1}\delta_i(\alpha)p\delta_j(\alpha-\epsilon^i)
x^{\alpha-\epsilon^i-\epsilon^j} = pq^{-1}\partial_j\partial_i(x^\alpha)
\end{equation}

For $i<k<j,$ we have $\eta_{kj}(\alpha) = pq^{-1}
\eta_{kj}(\alpha-\epsilon^{i})$. Hence
\begin{equation}
\sigma_{kj}\partial_i(x^{\alpha}) = \delta_i(\alpha)
\eta_{kj}(\alpha-\epsilon^{i})
x^{\alpha-\epsilon^{i}+\epsilon^{k}-\epsilon^{j}} = p^{-1}q
\eta_{kj}(\alpha) \delta_i(\alpha)
x^{\alpha-\epsilon^{i}+\epsilon^{k}-\epsilon^{j}} = p^{-1}q
\partial_i(\sigma_{kj}(x^{\alpha}))
\end{equation}
which shows that  $\partial_i\sigma_{kj} = pq^{-1} \sigma_{kj}\partial_i$ for
all $i<k<j$.

For $i<k=j$, we have $\eta_{jj}(\alpha) = pq^{-1}\eta_{jj}(\alpha-\epsilon^i)$.
Thus
\begin{equation}\partial_i\sigma_{jj}(x^{\alpha})
= \eta_{jj}(\alpha)\delta_i(\alpha) x^{\alpha-\epsilon^{i}}
= pq^{-1}\delta_i(\alpha)\eta_{jj}(\alpha-\epsilon^i)x^{\alpha-\epsilon^{i}}
= pq^{-1}\sigma_{jj}(\partial_i(x^{\alpha})),
\end{equation}
showing $\partial_i\sigma_{jj}=pq^{-1}\sigma_{jj}\partial_i$ for $i<j$.

For $k<i<j$ using
$\eta_{kj}(\alpha)\delta_i(\alpha)=\eta_{ki}(\alpha)\delta_j(\alpha)$ we get:
\begin{eqnarray}
\partial_i\sigma_{kj}(x^{\alpha})
 &=&\eta_{kj}(\alpha)\delta_i(\alpha+\epsilon^{k}-\epsilon^{j})x^{
\alpha-\epsilon^{i}+\epsilon^{k}-\epsilon^{j}}\\
\nonumber  &=&pq^{-1}
\eta_{kj}(\alpha)\delta_i(\alpha)x^{\alpha-\epsilon^{i}+\epsilon^{k}-\epsilon^{j
}}\\
\nonumber  &=& pq^{-1}
\eta_{ki}(\alpha)\delta_j(\alpha)x^{\alpha-\epsilon^{i}+\epsilon^{k}-\epsilon^j}
\\
\nonumber  &=& pq^{-1}
\eta_{ki}(\alpha)\delta_j(\alpha+\epsilon^k-\epsilon^i)x^{\alpha-\epsilon^{i}
+\epsilon^{k}-\epsilon^j}
= pq^{-1}\partial_j\sigma_{ki}(x^\alpha)
\end{eqnarray}
showing
$\partial_i\sigma_{kj}(x^{\alpha})-pq^{-1}\partial_j\sigma_{ki}(x^{\alpha})=0.$
In a similar way, the relation
$$pq^{-1}\sigma_{kj}\partial_i(x^{\alpha})-\sigma_{ki}\partial_j(x^{\alpha})=0$$
holds for $k<i<j.$ Lastly, we show that the equations
$$\partial_i\sigma_{ij}(x^{\alpha})-pq^{-1}\partial_j\sigma_{ii}(x^{\alpha}) =
pq^{-1}\sigma_{ij}\partial_i(x^{\alpha})-\sigma_{ii}\partial_j(x^{\alpha}),
\qquad  i<j$$
are satisfied, because of the following equations for $i<j$
$$\sigma_{ii}\partial_j(x^{\alpha})=\frac{q^{-1}p^{\alpha_i}}{p-1}\eta_{ij}
(\alpha)
\pi_i(\alpha)\lambda_i(\alpha)x^{\alpha-\epsilon^{j}} = q^{-1}
\partial_j\sigma_{ii}(x^{\alpha})$$
$$\partial_i\sigma_{ij}(x^{\alpha})=\frac{q^{-1}}{p-1}\eta_{ij}(\alpha)
\pi_i(\alpha)\lambda_i(\alpha)(p^{\alpha_i+1}-1)x^{\alpha-\epsilon^{j}},$$
$$\sigma_{ij}\partial_i(x^{\alpha})=\frac{p^{-1}(p^{\alpha_i}-1)}{p-1}
\eta_{ij}(\alpha)
\pi_i(\alpha)\lambda_i(\alpha)x^{\alpha-\epsilon^{j}},$$

By using these equations we attain the equation:
$$\partial_i\sigma_{ij}(x^{\alpha})-pq^{-1}\partial_j\sigma_{ii}(x^{\alpha})
=-\frac{q^{-1}}{p-1}\eta_{ij}(\alpha)\pi_i(\alpha)\lambda_i(\alpha)x^{
\alpha-\epsilon^{j}}$$ and
$$pq^{-1}\sigma_{ij}\partial_i(x^{\alpha})-\sigma_{ii}\partial_j(x^{\alpha}
)=-\frac{q^{-1}}{p-1}\eta_{ij}(\alpha)\pi_i(\alpha)\lambda_i(\alpha)x^{
\alpha-\epsilon^{j}},$$ which completes the
proof the lemma.
\end{proof}

Denote by $\Omega=\bigwedge^{p^{-1}q}(\Omega^1)$ the quantum exterior algebra of
$\Omega^1$ over $K_q[x_1, \ldots, x_n]$ with respect to the matrix $Q'$.

\begin{theorem}  The derivation $d:K_q[x_1, \ldots, x_n]\rightarrow \Omega^1$
extends to a differential calculus $\bigwedge^{p^{-1}q}(\Omega^1)$ on $K_q[x_1,
\ldots, x_n]$. Furthermore the de Rham and the integral complex associated to
the
differential calculus $(\bigwedge^{p^{-1}q}(\Omega^1),d)$  are isomorphic.
\end{theorem}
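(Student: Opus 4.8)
\emph{Overall plan.} Both assertions reduce to results already proved in the triangular setting, together with one explicit computation in the parameters $p,q$ (which we take in $K\setminus\{0\}$; the genuinely non-diagonal case is $p\neq 1$). First, by Lemma \ref{manin-quantum-space-relations} the pair $(\partial,\sigma)$ is a right twisted multi-derivation on $K_q[x_1,\ldots,x_n]$ satisfying the equations $(\ref{qcomm})$ for the multiplicatively antisymmetric matrix $Q'$ with $Q'_{ij}=p^{-1}q$ for $i<j$. Hence Proposition \ref{ExtendingDiffStructToSkewExteriorAlgberas} applies and shows that $d$ extends to make $\Omega=\bigwedge^{p^{-1}q}(\Omega^1)$ an $n$-dimensional differential calculus on $K_q[x_1,\ldots,x_n]$ with $d(\omega_i)=0$; that this extension has $d(x_i)=\omega_i$ is immediate from $d(x^\alpha)=\sum_i\partial_i(x^\alpha)\omega_i$ once one notes $\delta_i(\epsilon^i)=1$ (as $\pi_i(\epsilon^i)=\lambda_i(\epsilon^i)=1$) and $\partial_j(x_i)=0$ for $j\neq i$.

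\emph{Reduction via Theorem \ref{uppertriangular}.} By Lemma \ref{sigma-Manin-space} the algebra map $\sigma$ is upper triangular and its $i$-th diagonal entry acts on a monomial by $\sigma_{ii}(x^\alpha)=\eta_{ii}(\alpha)x^\alpha$ with $\eta_{ii}(\alpha)=\pi_i(\alpha)\ol_i(\alpha)\lambda_i(\alpha)p^{\alpha_i}\in K\setminus\{0\}$; hence every $\sigma_{ii}$ is an automorphism of $A$, so by \cite[Proposition 3.3]{BrzezinskiKaoutitLomp} the twisted multi-derivation $(\partial,\sigma)$ is free and Theorem \ref{uppertriangular} is applicable. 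Its part (2) already furnishes the right $A$-module isomorphisms $\Theta_m\colon\Omega^m\to\Hom{A}{\Omega^{n-m}}{A}$, so the de Rham and integral complexes will be isomorphic once the Poincaré condition $(\ref{poincare-cond})$ is verified for $Q=Q'$; part (3) then completes the proof.

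\emph{Checking $(\ref{poincare-cond})$.} One computes $\bdet{\sigma}=\sigma_{11}\circ\cdots\circ\sigma_{nn}$, again a scalar operator $\bdet{\sigma}(x^\alpha)=D(\alpha)x^\alpha$; collecting the factors $\pi_i,\ol_i,\lambda_i,p^{\alpha_i}$ over all $i$ gives $D(\alpha)=\prod_j p^{\alpha_j(n-j+1)}q^{\alpha_j(2j-n-1)}$, whence $\bdet{\sigma}^{-1}\circ\partial_i\circ\bdet{\sigma}$ acts on $x^\alpha$ by multiplying $\partial_i(x^\alpha)$ with $D(\alpha)/D(\alpha-\epsilon^i)=p^{n-i+1}q^{2i-n-1}$. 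Since $\prod_j Q'_{ij}=(pq^{-1})^{i-1}(p^{-1}q)^{n-i}=p^{2i-n-1}q^{n-2i+1}$, the right-hand side of $(\ref{poincare-cond})$ equals the operator $p^i\partial_i$. On the other side one evaluates $\partial_i^\sigma=\sum_{j,k}\bsi_{kj}\circ\partial_j\circ\hsi_{ki}$ from the inductive descriptions of $\bsi$ (lower triangular) and $\hsi$ (upper triangular) recalled before Theorem \ref{uppertriangular}, together with the commutation relations of Lemma \ref{manin-quantum-space-relations}; the nested sums collapse and one obtains $\partial_i^\sigma=p^i\partial_i$ as well, so $(\ref{poincare-cond})$ holds. (For $n=1$ this is simply $\partial_i^\sigma=\sigma_{ii}^{-1}\partial_i\sigma_{ii}=p\,\partial_i$.) Moreover $\partial_i^\sigma(1)=0$, because $\partial_j(1)=0$ and $\hsi_{ki}(1)\in K$, so the hom-connection $\nabla$ is flat, and Theorem \ref{uppertriangular}(3) yields that $\Theta=(\Theta_m)_{m=0}^n$ is a chain isomorphism from the de Rham complex to the integral complex.

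\emph{Main obstacle.} The identity $\partial_i^\sigma=p^i\partial_i$ is the delicate step. Unlike the diagonal case of Corollary \ref{extend_diff_structure_cor}, here $\bsi$ and $\hsi$ are genuinely triangular rather than diagonal, so their off-diagonal entries are iterated sums of composites of the maps $\sigma_{kj}$; after substituting these into $\sum_{j,k}\bsi_{kj}\partial_j\hsi_{ki}$ and repeatedly applying relations such as $\partial_i\sigma_{kj}=pq^{-1}\sigma_{kj}\partial_i$ for $i<k\le j$ one must show that all cross-terms cancel and only the scalar $p^i$ times $\partial_i$ survives. Once this bookkeeping is carried out, nothing further is required.
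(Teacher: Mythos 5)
Your overall strategy coincides with the paper's: extend $d$ via Proposition \ref{ExtendingDiffStructToSkewExteriorAlgberas} and Lemma \ref{manin-quantum-space-relations}, observe from Lemma \ref{sigma-Manin-space} that $\sigma$ is upper triangular with automorphisms $\sigma_{ii}$ on the diagonal, so that $(\partial,\sigma)$ is free and Theorem \ref{uppertriangular} applies, and then verify condition (\ref{poincare-cond}) by showing that both sides act as $p^i\partial_i$. Your evaluation of the right-hand side is correct and complete: writing $\bdet{\sigma}(x^\alpha)=D(\alpha)x^\alpha$ with $D(\alpha)/D(\alpha-\epsilon^i)=p^{n-i+1}q^{2i-n-1}$ and $\prod_j q'_{ij}=p^{2i-n-1}q^{n-2i+1}$ gives exactly $p^i\partial_i$, in agreement with the paper's factor-by-factor computation of $q'_{ij}\eta_{jj}(\alpha)\eta_{jj}(\alpha-\epsilon^i)^{-1}$.

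There is, however, a genuine gap: the identity $\partial_i^\sigma=p^i\partial_i$ is asserted but never proved. You flag it yourself as the ``main obstacle'' and state that ``once this bookkeeping is carried out, nothing further is required'' --- but that bookkeeping \emph{is} the substance of the proof, and it occupies most of the paper's argument. Concretely, one must first produce explicit formulas for the entries of $\bsi$ (the paper obtains $\bsi_{ij}(x^{\alpha})=q\,\pi_i(\alpha)^{-1}\ol_j(\alpha)^{-1}\lambda_i(\alpha)^{-1}(p^{-\alpha_i}-1)q^{\alpha_j-\alpha_i}x^{\alpha+\epsilon^{j}-\epsilon^{i}}$ for $i>j$) and of $\hsi$ (namely $\hsi_{ij}=p^{j-i}\sigma_{ij}$ for $i<j$); neither follows from the commutation relations of Lemma \ref{manin-quantum-space-relations} alone, but from the inductive construction in \cite[Proposition 3.3]{BrzezinskiKaoutitLomp}. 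One must then evaluate $\bsi_{kj}\circ\partial_j\circ\hsi_{ki}(x^\alpha)$ in the cases $j<k<i$, $j=k<i$, $j<k=i$ and $j=k=i$, and show that the partial sums $\Lambda_k=\sum_{j\le k}\bsi_{kj}\circ\partial_j\circ\hsi_{ki}(x^\alpha)$ telescope via $\pi_j(\alpha)(p^{\alpha_j}-1)=\pi_{j+1}(\alpha)-\pi_j(\alpha)$ and add up to $p^i\partial_i(x^\alpha)$. Without this computation condition (\ref{poincare-cond}) is not verified and the second assertion of the theorem is not established. (Your side remarks are fine: $d(x_i)=\omega_i$, the $n=1$ sanity check, and the flatness $\partial_i^\sigma(1)=0$ all check out once the identity is granted.)
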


\begin{proof}
The first statement follows from Proposition
\ref{ExtendingDiffStructToSkewExteriorAlgberas} and Lemma
\ref{manin-quantum-space-relations}. We have an upper-triangular
$\sigma=(\sigma_{ij})$ matrix by Lemma \ref{sigma-Manin-space}, of
which the diagonal entries $\sigma_{ii},i=1,\ldots ,n$ are
automorphisms. Hence we construct the corresponding
lower-triangular matrix $\bar\sigma$ according to
\cite[Proposition 3.3]{BrzezinskiKaoutitLomp}. The entries of
$\bsi$ are $\bsi_{ij}=0$ for $i<j$ and
$\bsi_{ii}=\sigma_{ii}^{-1}$ while
\begin{equation}\bsi_{ij}(x^{\alpha})=q \pi_i(\alpha)^{-1}
\ol_j(\alpha)^{-1}\lambda_i(\alpha)^{-1}(p^{-\alpha_i}-1)q^{\alpha_j-\alpha_i}x^{\alpha+\epsilon^{j}
-\epsilon^{i}},\end{equation} for $\alpha \in \NN^n$ and $i>j$.
Applying \cite[Proposition 3.3]{BrzezinskiKaoutitLomp} again
yields the map $\hsi$. The entries of $\hsi$ are $\hsi_{ij}=0$ for
$i>j$ and $\hsi_{ii}=\sigma_{ii}$ while
$\hsi_{ij}=p^{j-i}\sigma_{ij}$ for $i<j$.

By using these formulas for the entries of the matrices
$\bar\sigma(x^{\alpha})$ and $\hat\sigma(x^{\alpha})$, we
obtain an explicit expression for
\begin{eqnarray*}\partial_{i}^{\sigma}(x^{\alpha})&=&\sum_{1\leq j\leq k \leq
i}\bsi_{kj}
\circ \partial_{j}\circ\hsi_{ki}(x^{\alpha}).
\end{eqnarray*}
for any fixed $i=1,\ldots, n$. For $j< k < i$ we get:
\begin{equation*}
\bsi_{kj}\circ \partial_{j}\circ\hsi_{ki}(x^{\alpha})=-p^{i-k}
 \pi_j(\alpha)
\pi_k(\alpha)^{-1}(p-p^{-\alpha_k})(p^{\alpha_{j}}-1)\partial_i(x^\alpha)
 \end{equation*}
 while for $j=k<i$ we have:
\begin{equation*}
\bsi_{kk}\circ \partial_{k}\circ\hsi_{ki}(x^{\alpha}) =p^{i-k}
(p-p^{-\alpha_k}) \partial_i(x^\alpha)
\end{equation*}
Thus for any $k<i$ we get the partial sum:
\begin{eqnarray*}\Lambda_{k}&=&\sum_{j=1}^{k}\bar\sigma_{kj}\circ
\partial_{j}\circ\hat\sigma_{ki}(x^{\alpha})\\
&=& \sum_{j=1}^{k-1} -p^{i-k} \pi_j(\alpha) \pi_k(\alpha)^{-1}
(p-p^{-\alpha_k})(p^{\alpha_{j}}-1)\partial_i(x^\alpha)
+  p^{i-k}(p-p^{-\alpha_k})\partial_i(x^\alpha)\\
&=& \left[ 1 - \sum_{j=1}^{k-1} \pi_j(\alpha)(p^{\alpha_{j}}-1)
\pi_k(\alpha)^{-1}\right]
p^{i-k} (p-p^{-\alpha_k})\partial_i(x^\alpha)\\
&=& \left[ \pi_k(\alpha) - \pi_k(\alpha) + 1\right] \pi_k(\alpha)^{-1} p^{i-k}
(p-p^{-\alpha_k})\partial_i(x^\alpha)
= \pi_k(\alpha)^{-1} p^{i-k} (p-p^{-\alpha_k})\partial_i(x^\alpha)\\
\end{eqnarray*}

Similarly, for $k=i$ we have for $j<k=i$:
$\bsi_{ij}\circ \partial_{j}\circ\hsi_{ji}(x^{\alpha})
=-p \pi_j(\alpha) (p^{\alpha_{j}}-1)\pi_i(\alpha)^{-1}\partial_i(x^\alpha)$
and for $j=k=i$ we have $\bsi_{ii}\circ \partial_{i}\circ\hsi_{ii}(x^{\alpha}) =
p\partial_i(x^\alpha).$
This gives
$$\Lambda_{i}=\sum_{j=1}^{i}\bar\sigma_{ij}\circ
\partial_{j}\circ\hat\sigma_{ii}(x^{\alpha})=p\pi_i(\alpha)^{-1}
\partial_i(x^\alpha).$$ The sum of these partial sums $\Lambda_k$ yields:

\begin{eqnarray*}\partial_{i}^{\sigma}(x^{\alpha})
&=&\sum_{k=1}^{i}\Lambda_{k} =\sum_{k=1}^{i-1} \pi_k(\alpha)^{-1} p^{i-k}
(p-p^{-\alpha_k})\partial_i(x^\alpha) +
p\pi_i(\alpha)^{-1} \partial_i(x^\alpha) \\
&=& \left[\sum_{k=1}^{i-1} \pi_k(\alpha)^{-1} p^{i-k} (p-p^{-\alpha_k}) +
p\pi_i(\alpha)^{-1}\right] \partial_i(x^\alpha) \\
&=& p \lambda_i(\alpha) \frac{p^{\alpha_i}-1}{p-1} \left[ 1+ p^{i-1}
\sum_{k=1}^{i-1} p^{-k}(p^{\alpha_k+1}-1)
\left(\prod_{k<s<i} p^{\alpha_s}\right) \right] x^{\alpha-\epsilon^i}\\
&=& p \lambda_i(\alpha) \frac{p^{\alpha_i}-1}{p-1} \left[ 1+
p^{i-1} \sum_{k=1}^{i-1} \left( (p^{-(k-1)} \left(\prod_{k-1 <
s<i} p^{\alpha_s}\right) - p^{-k} \left(\prod_{k < s<i}
p^{\alpha_s}\right)\right) \right]
x^{\alpha-\epsilon^i}\\
&=& p \lambda_i(\alpha) \frac{p^{\alpha_i}-1}{p-1} \left[ 1+ p^{i-1} \left(
\pi_i(\alpha) - p^{-(i-1)} \right) \right] x^{\alpha-\epsilon^i}\\
&=&p^i\partial_i(x^\alpha)
\end{eqnarray*}

In order to apply Theorem \ref{uppertriangular}, we need to
calculate $\bdet\sigma$ as well as $\prod_{j} q'_{ij}$ where
$Q'=(q'_{ij})$ is the corresponding multiplicatively antisymmetric
matrix with $q'_{ij} = p^{-1}q$ for $i<j$. Let $\alpha \in \NN^n$.
By Theorem \ref{uppertriangular} it is enough to show that
$\partial_i^\sigma(x^\alpha) = \left(\prod_j
q'_{ij}\right)\bdet{\sigma}^{-1}(\partial_i(\bdet{\sigma}(x^\alpha)))$
holds, i.e.
$$ p^i\partial_i(x^\alpha) = \left(\prod_{j}
q'_{ij}\eta_{jj}(\alpha)\eta_{jj}(\alpha-\epsilon^i)^{-1}
\right)\partial_i(x^\alpha). $$ By the definition of $\eta_{ij}$
we obtain
$p^{-1}q\eta_{jj}(\alpha)\eta_{jj}(\alpha-\epsilon^i)^{-1} = 1$
for $i<j$ and
$pq^{-1}\eta_{jj}(\alpha)\eta_{jj}(\alpha-\epsilon^i)^{-1} = p$
for $i>j$, while
$\eta_{ii}(\alpha)\eta_{ii}(\alpha-\epsilon^i)^{-1} = p$. Hence
the product of the
$q'_{ij}\eta_{jj}(\alpha)\eta_{jj}(\alpha-\epsilon^i)^{-1}$ equals
$p^i$ and by Theorem \ref{uppertriangular} $K_q[x_1,\ldots, x_n]$
satisfies the strong Poincaré duality with respect to the
differential calculus $(\bigwedge^{p^{-1}q}(\Omega^1), d)$.

\end{proof}

\section{Conclusion}
Necessary and sufficient conditions to extend the associated FODC
$(\Omega^1,d)$ of a right twisted multi-derivation
$(\partial,\sigma)$ on an algebra $A$ to a full differential
calculus $(\Omega,d)$ on the quantum exterior algebra $\Omega$ of
$\Omega^1$  have been presented in this paper. A chain map between
the de Rham complex and the integral complex has been defined and
an criterion has been given to assure an isomorphism between the
de Rham and the integral complexes for free right upper-triangular
twisted multi-derivations whose associated FODC can be extended to
a full differential calculus on the quantum exterior algebra.
Easier criteria for FODCs with a diagonal bimodule structure have
been established and have been applied to show that a multivariate
quantum polynomial algebra satisfies the strong Poincaré duality in the sense of T.Brzezinski with respect to some canonical FODC. Lastly, we showed that for a
certain two-parameter $n$-dimensional (upper-triangular) calculus
over Manin's quantum $n$-space the de Rham and integral complexes
are isomorphic.

Future work will consist in extending our duality criteria to general FODCs having an upper-triangular bimodule structure.

\end{document}